\def\Z{{   \mathbb Z }}
\def\Q{{\mathbb Q}}
\def\F{{\mathbb F}}
\def\SSA{{Schur $\sigma$-ancestor }}
\def\Meas{{\mathrm{Meas}}}
\def\Rbar{{\overline{R}}}
\def\A{{Y}}  
\def\see{{y}}  
\def\r{{h}}  
\def\psibar{{\Psi}}
\def\FF{{\mathcal{F}}}
\def\ch{{\mathrm{ch}}}
\def\Freq{{\mathrm{Freq}}}
\def\CC{{\mathcal{C}}}
\def\IPAD{{\mathrm{IPAD}}}
\def\magma{{\tt Magma}}
\def\pari{{\tt PARI/GP}}
\def\a{{\mathfrak a}}
\def\G{{G}}
\def\ab{{\mathrm{ab}}}
\def\ipad{{\mathrm{IPAD}}}
\def\Gal{\mathrm{Gal}}
\def\Aut{{\mathrm{Aut}}}
\newtheorem{theorem}{Theorem}[section]
\newtheorem{conjecture}[theorem]{Conjecture}
\newtheorem{thm-fr}{Th\'eor\`eme}[section]
\newtheorem{lem-fr}{Lemme}[section]
\newtheorem{cor-fr}{Corollaire}[section]
\newtheorem{lemma}[theorem]{Lemma}
\newtheorem{corollary}[theorem]{Corollary}
\newtheorem{prop-fr}[theorem]{Proposition}
\theoremstyle{remark}
\newtheorem{remark}[theorem]{Remark}
\newtheorem{rem-fr}[theorem]{Remarque}
\newtheorem{example}[theorem]{Example}
\newtheorem{definition}[theorem]{Definition}
\newtheorem{def-fr}[theorem]{D\'efinition}
\begin{document}

\title{Heuristics for $p$-class Towers of Imaginary Quadratic Fields}

\author{Nigel Boston}
\email{boston@math.wisc.edu}
\address{Department of Mathematics, University of Wisconsin - Madison,  
480 Lincoln Drive, Madison, WI 53706, USA}

\author{Michael R. Bush}
\email{bushm@wlu.edu}
\address{Department of Mathematics,
Washington and Lee University,
Lexington, Virginia 24450, USA}

\author{Farshid Hajir}
\email{hajir@math.umass.edu}
\address{Department of Mathematics \& Statistics, University of Massachusetts - Amherst, 710 N. Pleasant Street, Amherst MA 01003, USA}

\classification{11R29, 11R11}
\keywords{Cohen-Lenstra heuristics, class field tower, ideal class group, Schur $\sigma$-group}
\thanks{The research of the first author was supported by NSA Grant
  MSN115460.  
  }

\dedication{With An Appendix by Jonathan Blackhurst \\  ~ \\ Dedicated to Helmut Koch}

\begin{abstract}
Cohen and Lenstra have given a heuristic which, for a fixed odd prime
$p$, leads to many interesting predictions about the distribution of
$p$-class groups of imaginary quadratic fields.  We extend the
Cohen-Lenstra heuristic to a non-abelian setting by considering, for
each imaginary quadratic field $K$, the Galois group of the $p$-class
tower of $K$, i.e. $\G_K:=\Gal(K_\infty/K)$ where $K_\infty$ is the
maximal unramified $p$-extension of $K$.  By class field theory, the
maximal abelian quotient of $\G_K$ is isomorphic to the $p$-class
group of $K$.  For integers $c\geq 1$, we give a heuristic of
Cohen-Lenstra type for the maximal $p$-class $c$ quotient of $\G_K$ and
thereby give a conjectural formula for how frequently a given
$p$-group of $p$-class $c$ occurs in this manner.  In particular, we
predict that every finite Schur $\sigma$-group occurs as $G_K$ for
infinitely many fields $K$.  We present numerical data in support of
these conjectures.
\end{abstract}

\maketitle

\section{Introduction}

\subsection{Cohen-Lenstra Philosophy}

About 30 years ago, Cohen and Lenstra \cite{CL1,CL2} launched a
heuristic study of the distribution of class groups of number fields.
To focus the discussion, we restrict to a specialized setting.  Let
$p$ be an odd prime.  Among the numerous insights contained in the
work of Cohen and Lenstra, let us single out two and draw a
distinction between them: (1) There is a natural probability
distribution on the category of finite abelian $p$-groups for which
the measure of each $G$ is proportional to the reciprocal of the size
of $\Aut(G)$; and (2) the distribution of the $p$-part of class groups
of imaginary quadratic fields is the same as the Cohen-Lenstra
distribution of finite abelian $p$-groups.  The first statement, a
purely group-theoretical one, is quite accessible and Cohen and
Lenstra prove many beautiful facts about such distributions (not just
for abelian groups viewed as $\Z$-modules but also more generally for
modules over rings of integers of number fields) in the first part of
\cite{CL2}.  The second, and bolder, insight is much less accessible
at present but leads to striking number-theoretical predictions, only
a small number of which have been proven, but all of which agree with
extensive numerical data.  Note that (2) quantifies the notion that
the (rather elementary) necessary conditions for a group to occur as
the $p$-part of the class group of an imaginary quadratic field -
namely that it be a finite abelian $p$-group - should also be
sufficient.

In the decades since the publication of \cite{CL1,CL2}, the
application of (1) has been broadened to a number of other situations.
It should be noted, however, that there are many circumstances where
the weighting factor should also involve some power of the order of
$G$.  This includes recent investigations into variation of
Tate-Shafarevich groups, variation of $p$-class tower groups ($p$ odd)
for real quadratic fields (to be described in a subsequent paper by
the authors) and variation in presentations of $p$-groups as described
in \cite{B}. The case under consideration in the current paper,
however, does not involve these extra factors.

As regards the combination of (1) and (2), one can speak of a
``Cohen-Lenstra strategy,'' perhaps, as follows.  Suppose we have a
sequence $G_1, G_2, \ldots$ of $p$-groups (arising as invariants
attached to some kind of arithmetic objects, say).  One can hope to
identify a category $\CC$ of groups in which the sequence lies and to
assign to each $G$ in $\CC$ a positive real number $w(G)$ called
its weight; 
we would expect the size of $\Aut_\CC(G)$ (the set of
automorphisms of $G$ in the category $\CC$) to appear in the denominator
of $w(G)$.  We set $w_\CC=\sum_{G \in \CC} w(G)$ for the total weight of $\CC$, assumed to be a finite quantity.  Suppose we also define the
frequency with which any object $G$ of $\CC$ occurs in the sequence
$(G_n)_{n\geq 1}$ to be the limit
$$ \Freq(G)=\lim_{n \to \infty} \frac{\sum_{\nu=1}^n\ch_G(G_\nu)}{n}
$$ assuming this exists.  Here, $\ch_G(H)$ is the characteristic
function of $G$, taking the value $1$ if $H$ is isomorphic to $G$ (in
the category $\CC$) and $0$ otherwise. The Cohen-Lenstra philosophy
would then say that, assuming the sequence $(G_n)_{n\geq 1}$ is
sufficiently general and the category $\CC$ is correctly chosen, for
each $G \in \CC$ we would expect $\Freq(G)$ to equal the Cohen-Lenstra measure of $G$ in the category $\CC$, namely $w(G)/w_\CC$.  In such a situation, we can speak of the sequence
$(G_n)$ ``obeying a Cohen-Lenstra distribution for the category
$\CC$ equipped with the weight function $w$.''

As just some of the examples of applications of this philosophy we
cite Cohen-Martinet \cite{Cohen-Martinet}, Wittman \cite{Wittman}, and
Boston-Ellenberg \cite{BE}.  In the first two of these, the class
groups are in fact studied as modules over the group ring of the
Galois group.  In \cite{BE}, the groups under study are non-abelian,
and in fact the situation is slightly different because the base field
is fixed (to be $\Q$) and the ramifiying set varies; however the
essential Cohen-Lenstra idea appears to apply in that situation also.

\subsection{The Cohen-Lenstra heuristics for $p$-class groups}

For an algebraic number field $K$, we let $A_K$ be the $p$-Sylow
subgroup of its ideal class group.  If we allow $K$ to vary
over all imaginary quadratic fields, ordered according to increasing
absolute value of the discriminant $d_K$, the groups $A_K$ fluctuate
with no immediately apparent rhyme or reason.   When Cohen and
Lenstra investigated their \emph{cumulative} behavior, however, they found a
surprising pattern.  Namely, they asked what can be said about the
frequency with which a given group would occur as $A_K$ when the
fields $K$ are ordered by the magnitude of their discriminants.  Their
heuristic, described above, led them to many predictions, one of which
is the following conjecture.

\begin{conjecture}[(Cohen-Lenstra)]
\label{cohenlenstra}
Fix a finite abelian group $G=\Z/p^{r_1} \times \cdots
\times \Z/p^{r_g}$ of rank $g\geq 1$.  Among the imaginary quadratic fields
$K$ such that $A_{K}$ has rank $g$, ordered by discriminant, the
probability that $A_{K}$ is isomorphic to $G$ is
$$
\frac{1}{|\Aut(G)|} \, {p^{g^2}}  \prod_{k=1}^g (1-p^{-k})^2.
$$
\end{conjecture}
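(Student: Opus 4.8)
\emph{Proof proposal.}\quad Conjecture~\ref{cohenlenstra} is a prediction of the Cohen--Lenstra heuristic rather than an unconditional theorem, so the argument I have in mind has two layers, and the plan is to isolate the rigorous one. The conjectural input is the basic Cohen--Lenstra assumption of \cite{CL1,CL2}: that, as $K$ runs over imaginary quadratic fields ordered by $|d_K|$, the groups $A_K$ become equidistributed with respect to the measure on isomorphism classes of finite abelian $p$-groups that assigns $G$ mass proportional to $1/|\Aut(G)|$. Granting this and conditioning on the event $\rk(A_K)=g$, the conditional distribution of $A_K$ is that same measure renormalised on the rank-$g$ groups, so the entire remaining content is the group-theoretic identity
$$
\sum_{\rk(H)=g}\frac{1}{|\Aut(H)|}\;=\;\frac{p^{g^{2}}}{\displaystyle\prod_{k=1}^{g}(p^{g}-p^{g-k})^{2}}\;=\;\frac{p^{g^{2}}}{|\GL_g(\F_p)|^{2}},
$$
the last equality holding because $\prod_{k=1}^{g}(p^{g}-p^{g-k})=\prod_{k=1}^{g}p^{g-k}(p^{k}-1)=|\GL_g(\F_p)|$; equivalently, one checks that the probabilities in the conjecture sum to $1$.

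To prove this identity I would use the surjection-counting technique of Cohen--Lenstra. For a finite abelian $p$-group $H$ with $\rk(H)=g$, a homomorphism $\Z_p^{\,g}\to H$ is surjective precisely when it is surjective modulo $p$ (topological Nakayama lemma), so the number of surjections is $|H|^{g}\,|\GL_g(\F_p)|/p^{g^{2}}$, and dividing by $|\Aut(H)|$ counts the finite-index sublattices $L\subseteq\Z_p^{\,g}$ with $\Z_p^{\,g}/L\cong H$. Summing $|H|^{-g}$ times this count over all rank-$g$ groups $H$ reproduces $\sum_{L}[\Z_p^{\,g}:L]^{-g}$ taken over those sublattices $L$ for which $\Z_p^{\,g}/L$ has rank $g$, i.e.\ those $L$ contained in $p\Z_p^{\,g}$; the bijection $L\leftrightarrow pL$ matches these with all finite-index sublattices and multiplies each index by $p^{g}$, so the sum equals $p^{-g^{2}}\zeta_g(g)$, where $\zeta_g(s)=\sum_{L}[\Z_p^{\,g}:L]^{-s}=\prod_{i=0}^{g-1}(1-p^{\,i-s})^{-1}$ is the local subgroup zeta function of $\Z_p^{\,g}$, evaluated at $s=g$ (inside its half-plane of convergence $s>g-1$). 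Hence
$$
\frac{|\GL_g(\F_p)|}{p^{g^{2}}}\sum_{\rk(H)=g}\frac{1}{|\Aut(H)|}\;=\;p^{-g^{2}}\,\zeta_g(g),
$$
and substituting $\zeta_g(g)=\prod_{j=1}^{g}(1-p^{-j})^{-1}$ together with $|\GL_g(\F_p)|=p^{g(g-1)/2}\prod_{j=1}^{g}(p^{j}-1)$ and simplifying gives $\sum_{\rk(H)=g}1/|\Aut(H)|=p^{g}/\prod_{j=1}^{g}(p^{j}-1)^{2}=p^{g^{2}}/|\GL_g(\F_p)|^{2}$, as required.

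The main obstacle is decidedly \emph{not} the group theory above, which is elementary and essentially due to Cohen and Lenstra; the only points needing care are the convergence of $\sum_{\rk(H)=g}1/|\Aut(H)|$ and the classical product formula for $\zeta_g$, both routine. The genuine difficulty lies in the arithmetic input: the equidistribution of $A_K$ for the Cohen--Lenstra measure is unproven and appears far out of reach --- even the much weaker assertion that $\rk(A_K)=g$ for some imaginary quadratic field is open once $g\ge 3$. So the realistic status of this argument is a short conditional computation: everything beyond the group-theoretic identity rests on a conjecture that the present paper does not attempt to establish, but rather adopts as its organising principle and then seeks to extend to the non-abelian setting of $p$-class tower groups.
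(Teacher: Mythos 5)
Your proposal is correct, and it has the same two-layer structure as the paper's own justification (given in the remark following the conjecture): everything rests on the Cohen--Lenstra heuristic assumption, and the mathematical content is the computation of the normalising constant for the $1/|\Aut|$-measure restricted to rank-$g$ abelian $p$-groups. Where you differ is in how that constant is obtained. The paper simply quotes two facts from \cite{CL2} --- Hall's identity $\sum_G 1/|\Aut(G)| = \prod_{n\geq 1}(1-p^{-n})^{-1}$ over all finite abelian $p$-groups, and the formula $p^{-g^2}\prod_{n\geq 1}(1-p^{-n})\prod_{k=1}^g(1-p^{-k})^{-2}$ for the measure of the rank-$g$ groups --- and divides. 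You instead prove the single identity $\sum_{\rk(H)=g}1/|\Aut(H)| = p^{g^2}/|\GL_g(\F_p)|^2$ from scratch by counting surjections $\Z_p^g\to H$, passing to sublattices of $\Z_p^g$ contained in $p\Z_p^g$, and evaluating the subgroup zeta function $\prod_{i=0}^{g-1}(1-p^{i-s})^{-1}$ at $s=g$; I checked the bookkeeping ($p^{g}/\prod_{k=1}^g(p^k-1)^2 = p^{g^2}/|\GL_g(\F_p)|^2 = p^{-g^2}\prod_{k=1}^g(1-p^{-k})^{-2}$) and it agrees with the paper's quotient of the two cited quantities. Your route buys a self-contained verification that the stated probabilities sum to $1$ over rank-$g$ groups, at the cost of re-deriving what \cite{CL2} already provides; the paper's route is shorter but leans entirely on citation. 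Your closing caveats about the conditional status of the statement match the paper's framing exactly.
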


\begin{remark}
\label{abelian-weights}
We provide more detail on how the above conjecture
is related to the heuristic that groups should be weighted
according to the inverse of the size of an appropriate automorphism group.
For a finite abelian group $G$, if we define the Cohen-Lenstra weight of
$G$ to be simply $w(G)=1/|\Aut(G)|$, then 
it is a theorem of Hall \cite{Hall2} and, in a more general context, of
Cohen-Lenstra, that the total weight $w_p$ of all finite abelian $p$-groups
is given by
$$
w_p=\sum_{H} w(H) = \prod_{n\geq 1} (1-p^{-n})^{-1},
$$
where $\sum_H$ means the sum over the isomorphism classes of
finite abelian $p$-groups. 
By  \cite[p.~56]{CL2}, the probability that an abelian
$p$-group has generator rank $g$ is given by
$$
\frac{ \sum_{\{H:d(H)=g\}} w(H) }{ \sum_{H} w(H)} =
p^{-g^2} \prod_{n\geq 1}(1-p^{-n}) \prod_{k=1}^g (1-p^{-k})^{-2}.
$$ 
Thus, under the Cohen-Lenstra distribution, 
the probability that a randomly chosen abelian $p$-group of generator
rank $g$ is isomorphic to $G$ is given by 
$$
\frac{w(G)}{\sum_{\{H:d(H)=g\}} w(H)} = 
\frac{1}{|\Aut(G)|} \, p^{g^2}  \prod_{k=1}^g
(1-p^{-k})^2.
$$
Cohen-Lenstra's fundamental heuristic assumption (2) then yields
Conjecture \ref{cohenlenstra}.  
\end{remark}

\subsection{Heuristics for the distribution of $p$-class tower groups}

In this article, we continue to assume that $p$ is odd and consider a non-abelian extension of the
number-theoretical objects studied by Cohen and Lenstra, passing from
the $p$-part of the class group of a number field $K$
 to the pro-$p$ fundamental group of
the ring of integers of $K$, namely the Galois group of its maximal
everywhere unramified $p$-extension.  For brevity, henceforth we will
refer to these groups as ``$p$-class tower groups.''  The key fact, as
pointed out in Koch-Venkov \cite{KV}, is
that $p$-class tower groups of imaginary quadratic fields (and certain of their quotients) must
satisfy a ``Schur $\sigma$'' condition; the precise definitions are
given below.  

To each finite Schur $\sigma$-group, or more generally to 
each maximal $p$-class $c$ quotient of such a group, we attach a rational
number we call its measure; it is given by a count of how likely it is 
for a randomly chosen set of relations of a certain type to define
the given group.
Our main heuristic assumption then, is that for the sequence of $p$-class
tower groups of imaginary quadratic fields, ordered by discriminant,
or more generally for the sequence of maximal $p$-class $c$ quotients
of these $p$-class tower groups (where $c$ is any fixed whole number),
the frequency of any given group equals the measure of the group.

To describe our situation in more detail, we specify some notation to
be used throughout the paper.  For a pro-$p$ group $G$, we write
$$d(G)=\mathrm{dim}_{\Z/p\Z} H^1(G,\Z/p\Z), \qquad r(G)=\dim_{\Z/p\Z}
H^2(G,\Z/p\Z),$$ where the action of $G$ on $\Z/p\Z$ is trivial. These
invariants give, respectively, the generator rank and relation rank of $G$ as a
pro-$p$ group.  The Frattini subgroup of $G$, denoted $\Phi(G)$, is defined to
be the closure of $[G,G]G^p$.  The groups
$G^{\ab}=G/\overline{[G,G]}$ and $G/\Phi(G)$ are, respectively, the
maximal abelian quotient and maximal exponent-$p$ abelian quotient of
$G$.

To describe how we pass to a non-abelian generalization, recall that
if $K_1$ is the $p$-Hilbert class field of $K$, defined to be its
maximal abelian unramified $p$-extension, then there is a canonical
isomorphism $A_{K} \to \mathrm{Gal}(K_1/K)$ given by the Artin
reciprocity map.  Now, let us consider the field $K_\infty$ obtained
by taking the compositum of {\em all} finite unramified $p$-extensions
of $K$, not just the abelian ones.  We put
$\G_K=\mathrm{Gal}(K_\infty/K)$.  It is clear that the maximal abelian
quotient of $\G_K$ is isomorphic to $A_K$ and by Burnside $d(\G_K) =
d(A_K)$.

The central question we consider in this work is: For a fixed odd prime
$p$, as $K$ varies over all imaginary quadratic fields of ascending
absolute value of discriminant, what can one say about the variation
of the groups $\G_K$?

Naturally, this is a more difficult question than the variation of
class groups, even for venturing a guess.  Already, the group $\G_K$
is not always finite.  Indeed, in \cite{KV}, Koch and Venkov proved
that $\G_K$ is infinite if $d(\G_K)\geq 3$; they did so by taking into
account all the facts they had at their disposal about the group
$\G_K$.  Namely, $\G_K$ is a finitely generated pro-$p$ group with
finite abelianization and deficiency $0$ (meaning that
$r(\G_K)-d(\G_K)=0$) and admits an automorphism of order $2$ which
acts as inversion on its abelianization (complex conjugation is such
an automorphism, for example).  Since having zero deficiency is
equivalent to having trivial Schur multiplier in this context, Koch
and Venkov dubbed groups having this particular set of properties
``Schur $\sigma$-groups.''  In Section 2, we review some of the work
of Koch and Venkov on Schur $\sigma$-groups, and develop a method
via counting relations, of measuring how frequently a given group occurs as
the  maximal $p$-class $c$ quotient of Schur $\sigma$-groups.

Positing our main heuristic assumption that
a finite $p$-group $G$ arises as a $p$-class tower group over an
imaginary quadratic field with the same frequency as $G$ occurs as a
randomly chosen group among Schur $\sigma$-groups, in Section~3 we
arrive at the following Conjecture, which should be compared to the
Cohen-Lenstra Conjecture above.

\begin{conjecture}\label{our-conjecture}
Suppose $G$ is a finite $p$-group which is a Schur $\sigma$-group of
generator rank $g\geq 1$ or, more generally, suppose $c$ is a positive
integer and $G$ is the maximal $p$-class $c$ quotient of a Schur
$\sigma$-group.  Then, among the imaginary quadratic fields $K$ such
that $A_K$ has rank $g$, ordered by discriminant, the probability that
$\G_{K}$ (or in the fixed $p$-class case, the maximal $p$-class $c$
quotient of $G_K$) is isomorphic to $G$ is equal to 
\begin{eqnarray*}
\frac{1}{|\Aut_\sigma(G)|} \, {p^{g^2}} 
\prod_{k=1}^g (1 - p^{-k}) \prod_{k=1+g-\r}^g (1 - p^{-k}),
\end{eqnarray*}
where $\r$ is the difference between the $p$-multiplicator rank and
nuclear rank of $G$  and $\Aut_\sigma(G)$ is the centralizer
in $\Aut(G)$ of an
automorphism $\sigma$ of order $2$, acting as inversion on the abelianization of
$G$.  We note that $0 \leq \r \leq g$ and that $\r = g$ for Schur
$\sigma$-groups, in which case the above formula should be compared with 
that of Conjecture \ref{cohenlenstra}.
\end{conjecture}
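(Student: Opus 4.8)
\medskip
\noindent\emph{Approach.}
The displayed formula is a conjecture rather than a theorem only because it incorporates the paper's main heuristic hypothesis: that the frequency with which a group $G$ (resp.\ a maximal $p$-class $c$ quotient $G$) occurs among the $\G_K$ with $d(A_K)=g$ equals the Cohen-Lenstra measure that Section~2 places on the category $\CC$ of Schur $\sigma$-groups of rank $g$ (resp.\ on the category $\CC_c$ of maximal $p$-class $c$ quotients of such), conditioned on generator rank $g$. Granting that hypothesis, the remaining content --- and what I would actually prove --- is the purely group-theoretic identity that this conditional measure $\mu_g$ (resp.\ $\mu_{g,c}$) assigns to $G$ exactly the stated value. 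This is the precise analogue of the derivation recalled in the Remark following Conjecture~\ref{cohenlenstra}, where the classical formula is extracted from Hall's mass identity together with the mass of rank-$g$ abelian groups; indeed, for abelian $G$ one has $\see(G)=1$ and $\r=g$, so the formula degenerates to Conjecture~\ref{cohenlenstra}.

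First I would recall the model underlying the measure. By Koch-Venkov \cite{KV}, every Schur $\sigma$-group of rank $g$, and every maximal $p$-class $c$ quotient of one, admits a deficiency-zero presentation $F_g/\langle r_1,\dots,r_g\rangle^{F_g}$, where $F_g$ is the free pro-$p$ group of rank $g$ carrying an automorphism $\sigma$ that acts as inversion on its abelianization and the relations $r_i$ may be chosen $\sigma$-symmetric; moreover such presentations are compatible with truncation along the lower exponent-$p$ central series. The measure is then the limit, taken over the finite quotients of $F_g$, of the uniform measures on $\sigma$-symmetric relation tuples (equivalently, on their images in the relevant finite ``relation spaces''), conditioned on generator rank $g$. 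Since it is a limit of uniform probability measures on finite sets, $\mu_g$ (and likewise $\mu_{g,c}$) is genuinely a probability measure, so its values automatically sum to $1$; the task reduces to evaluating $\mu_g(G)$ for a fixed $G$.

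To evaluate it, I would write $\mu_g(G)$ as the limiting proportion, among all admissible tuples at a given finite level, of those presenting $G$, and then identify the factors. An orbit-stabilizer argument for the action of $\Aut(G)$ through its inversion-compatible automorphisms --- whose stabilizers here are trivial --- together with the bookkeeping that identifies the size of the set of inversion automorphisms of $G$, yields the factor $1/|\Aut(G)|$. Burnside's basis theorem contributes the ``these $g$ images generate $G$'' factor $|\GL_g(\F_p)|/p^{g^2}$. Reconstructing a $\sigma$-symmetric relation tuple from the isomorphism class it determines involves, for each of the $g$ free generators, a choice parametrized by the $\sigma$-fixed subgroup of $G$, and this should be the origin of the factor $\see(G)^g$ (along the way one must also check that $\see(G)$ is independent of the chosen inversion $\sigma$). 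Finally, the requirement that the tuple present \emph{exactly} $G$ --- not a strictly larger Schur $\sigma$-group, resp.\ not a group that coincides with $G$ only after a deeper $p$-class truncation --- forces the relevant relations to span the $\r$-dimensional quotient of the $p$-multiplicator of $G$ by its nucleus, which a random $\sigma$-symmetric $g$-tuple does with probability $p^{-g\r}\prod_{k=1}^{\r}(p^g-p^{\r-k})$; this is where $\r$ enters, and where the identity $\r=g$ for Schur $\sigma$-groups plays its role. The finite-level denominators contribute only $G$-independent powers of $p$, which combine with $|\GL_g(\F_p)|/p^{g^2}$ to give $\prod_{k=1}^g(p^g-p^{g-k})$, and multiplying the factors produces the displayed expression.

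The main obstacle will not be any single calculation but the structural underpinnings. One must (i) make the measure rigorous --- show that the limit defining $\mu_g$ (and $\mu_{g,c}$) exists and is independent of the presentation choices, which rests squarely on Koch-Venkov's structure theory for Schur $\sigma$-groups and on its compatibility with $p$-class truncation; and (ii) analyze the action of $\sigma$ on the $p$-multiplicator and on the nucleus precisely enough to determine $\r$ --- in particular to verify $\r=g$ for Schur $\sigma$-groups and to describe the ``relative'' nucleus correctly in the fixed-class case, where $\r$ can be smaller --- and to confirm that $\see(G)$ occurs to the power exactly $g$. I expect step (ii), the $\sigma$-eigenspace analysis of the multiplicator and nucleus, to be the delicate point; testing the resulting formula against rank $1$ (where it reproduces the Cohen-Lenstra distribution of cyclic $p$-groups) and against other small cases would provide a useful consistency check.
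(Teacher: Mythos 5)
Your proposal is correct and follows essentially the same route as the paper: the statement is not proved but derived from the Main Heuristic Assumption ($\Freq(G)=\Meas(G)$) together with the group-theoretic evaluation of $\Meas(G)$ in Theorem~\ref{main-group-theory}, and your sketch of that evaluation matches the paper's proof --- the free action of $\Aut(G)$ on epimorphisms gives $1/|\Aut(G)|$, Burnside's basis theorem gives $\prod_{k=1}^g(p^g-p^{g-k})$, the requirement that the relators span the $\r$-dimensional multiplicator-modulo-nucleus quotient gives $p^{-g\r}\prod_{k=1}^{\r}(p^g-p^{\r-k})$, and the fibers of $t\mapsto t^{-1}\sigma(t)$ over $X^g$, measured via the $\sigma$-fixed subgroup, give $\see(G)^g$. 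The only cosmetic difference is that no conditioning on generator rank is actually needed, since the relators are drawn from $\Phi(F)$ and so every quotient automatically has rank exactly $g$.
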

\begin{remark}
It is important to realize that the formula here depends on an
additional group-theoretical conjecture, namely that all the Schur
$\sigma$-groups and their maximal $p$-class $c$ quotients satisfy a
kernel invariance property (KIP), see Definition~\ref{kip}.  This
condition is discussed in more detail in Section~\ref{subsection-kip}.
If it turns out that there exist maximal $p$-class $c$ quotients of
Schur $\sigma$-groups not satisfying KIP, then describing their
distribution may necessitate a more complicated weighting factor than
the one that appears in the formula of
Conjecture~\ref{our-conjecture}.  Regardless of the validity of the
formula, we demonstrate that our heuristics are compatible with the
statements made in the abelian setting. In particular, we show that
they imply Conjecture~\ref{cohenlenstra}.
\end{remark}

\begin{remark}
As in Remark \ref{abelian-weights}, the above conjecture
is related to a choice of weight function for finite Schur $\sigma$-groups.
Namely we introduce the weight
function $w'(G)=1/|\Aut_\sigma(G)|$ for finite Schur $\sigma$-groups
$G$.  Conjecture \ref{our-conjecture} then arises from the hypothesis
that, for a given finite Schur $\sigma$-group $G$, 
the density of $K$ for which $G_K$ is isomorphic to $G$ is equal to
$
{w'(G)}/{w_p}.
$

\end{remark}

\begin{remark}  
We do not make a direct prediction about how frequently a given
infinite Schur $\sigma$-group $G$ occurs as a $p$-class tower group,
but for every $c\geq 1$, the maximal $p$-class $c$ quotient of $G$ is
finite and the conjecture above applies to predict the density of
\emph{all} imaginary quadratic $K$ (including the ones where $G_K$
is infinite) for which the maximal $p$-class $c$ quotients
of $G_K$ and $G$ coincide.  If $G$ is a finite Schur $\sigma$-group,
then its generator rank $g$ is at most $2$. The first case of
Conjecture \ref{our-conjecture} predicts how frequently such a group
occurs as a $p$-class tower group for imaginary quadratic fields.
\end{remark}

\subsection{Numerical Evidence}

As theoretical evidence for their conjecture, Cohen and Lenstra were
able to show that a relatively cheap consequence of their heuristic
assumption, namely the prediction that the average value of
$3^{d_3(A_K)}$ (as $K$ ranges over all imaginary quadratic fields) is
$2$, is in fact a highly non-trivial theorem of Davenport and
Heilbronn \cite{DH}.  In more recent work, for example see \cite{bh},
Bhargava and his students have obtained deep refinements and
extensions of the Davenport-Heilbronn result, in particular verifying
further consequences of the Cohen-Lenstra and Cohen-Martinet
conjectures.

As regards numerical evidence, class groups of imaginary quadratic
fields can be computed via an efficient algorithm, and so the class
group computations available to Cohen and Lenstra were quite
extensive.  In \cite{CL2}, they derived many consequences of their
heuristic, every one of which matched and in some cases even
``explained'' the {\em observed} variation of the $p$-part of the
class group of imaginary quadratic fields.

In our non-abelian situation, we do not even know an algorithm for
determining whether $\G_K$ is finite, much less for computing it, so
the numerical investigation of our heuristic is bound to be more
tricky.  One of the first examples of a computation of $\G_K$ in the
literature appears in a 1934 article of Scholz and Taussky \cite{ST}:
for the field $\Q(\sqrt{-4027})$, with $p=3$, $A_K$ is elementary
abelian of rank 2 and the group $\G_K$ has size $243$ and is
isomorphic to the group denoted {\tt SmallGroup(243,5)} in the
terminology of the computer algebra software package \magma
(see~\cite{BEO}).  The method of Boston and Leedham-Green \cite{BL}
can be used for certain $K$ to produce a short list of candidates for
the isomorphism class of $\G_K$.  Unless $\G_K$ happens to be one of
a few small groups, it is difficult to identify the isomorphism type of 
$\G_K$ (see section \ref{section-IPAD} for more details, especially the 
proof of Theorem \ref{group_theory_predictions}).

In order to test our heuristic hypothesis, we considered what kind of
number-theoretical data (meaning about the groups $\G_K$) was within
reach, and settled on the following: we computed the class groups of
unramified extensions of $K$ of degree $1$ or $p$.  In terms of group
theory, this ``index $\leq p$ abelianization data'' or ``IPAD,''
describes the abelianization of $\G_K$ as well as those of its index
$p$ subgroups.  Though it is impractical at present to attempt the
complete computation of $\G_K$ for all fields $K$ within a given large
discriminant range, it was possible for us to compute the IPADs for
over $460,000$ fields with discriminant in the range $-10^8<d_K<0$ and
to compare the distribution of IPADs to the group-theoretical
prediction.

As a summary of the numerical evidence, the second to last column of
Table~\ref{last-table} in Section~5 lists the observed frequencies of
the most common IPADs with $p = 3$ and $g = 2$ over all imaginary
quadratic fields $K$ with $|d_K| < 10^8$. The last column then gives
the theoretical predictions based on our heuristic.  Given the
variability of the data and the general convergence trend toward the
predicted value, we believe that the data support our conjecture.

\subsection{Organization of the paper}

As in \cite{CL2}, we have separated the group theory, where we have
theorems, from the number theory, where we mostly make conjectures and
collect data. We develop some basic facts about Schur $\sigma$-groups
in Section 2 and introduce various measures in both the abelian and
non-abelian setting.  In Section 3, we give a precise formulation of
our conjecture describing the variation of Galois groups of $p$-towers
of imaginary quadratic fields. The distribution of IPADs of Schur
$\sigma$-groups is investigated in Section 4.  This investigation
yields a number of results which we prove using a mixture of theory
and computation, thanks to the powerful technique of organizing
$p$-groups via O'Brien's $p$-group generation algorithm \cite{O}.  The
number-theoretical data we have collected is summarized in Section 5;
see in particular, Tables 1 and 2.  The computations were carried out
using the symbolic algebra packages \magma~\cite{magma} and
\pari~\cite{pari}.  Finally, the appendix contains a proof, by
Blackhurst, of a group-theoretical fact needed in Section 2.

\section{Schur $\sigma$-groups}

\subsection{Preliminaries}\label{subsection-prelim}

Let $p$ be an odd prime.  

\begin{definition}
An automorphism of a finitely generated pro-$p$ group $G$ is called a
GI-automorphism (meaning ``generator-inverting") if it has order $2$
and acts as inversion on $G^{\ab}$.
\end{definition}

\begin{definition}
A finitely generated pro-$p$ group $G$ is called a Schur
$\sigma$-group of rank $g$ if it satisfies the following properties:
1) $d(G)=r(G)=g$; 2) $G^{\ab}$ is finite; 3) It has a GI-automorphism
$\sigma$.
\end{definition}
We now fix $g\geq 1$, and let $F$ denote the free pro-$p$ group on $g$
generators $x_1, \ldots, x_g$.  Let $\sigma$ be the automorphism of
$F$ induced by the assignment $\sigma(x_i)=x_i^{-1}$ for $i=1, \ldots,
g$. 
Koch and Venkov \cite{KV} showed that, given a GI-automorphism $\sigma$ on $G$, one can choose an epimorphism from $F$ to $G$ so that 
this automorphism is induced by the GI-automorphism $\sigma$ on $F$. In particular, this means that we can find generators for $G$ which lie in 
\[ X(G, \sigma) = \{s \in G \mid \sigma(s)=s^{-1}\}. \]
In addition, the relations of a Schur  $\sigma$-group can always be chosen to lie in
\[ X = X(\Phi(F), \sigma) = \{s \in \Phi(F) \mid \sigma(s)=s^{-1}\}. \]
Using refinements of the theorem of Golod and Shafarevich, Koch and Venkov proved that Schur $\sigma$-groups of rank $g \geq 3$
are always infinite.

In general, we will use the symbol $\sigma$ to denote both the specific automorphism of $F$ defined above and a general GI-automorphism on a group $G$ except when there is the potential for confusion.  Suppose $G$ is a pro-$p$ group and $\sigma$ is
a GI-automorphism of $G$.  As shown by Hall (section 1.3 of \cite{Hall1}, although
sometimes attributed to Burnside), the kernel from $\Aut(G) \rightarrow
\Aut(G/\Phi(G))$ is a pro-$p$ group and so by Schur-Zassenhaus
(e.g. Prop.~1.1 of \cite{GHR}), all lifts of order $2$ of the
inversion automorphism on $G/\Phi(G)$ are conjugate to each other.
It follows that the sets $X(G,\sigma)$ and $\A(G,\sigma)$ where 
\[ \A(G,\sigma) = \{ x \in G \mid \sigma(x) = x \}   \]
 are well-defined up to conjugacy and that their orders are independent of the choice of
GI-automorphism $\sigma$ and hence depend only on $G$. We will denote the order of $\A(G,\sigma)$ by $\see(G)$.
Also observe that $\A(G,\sigma) = \A(\Phi(G),\sigma)$. This follows since $p$ is odd and the automorphism induced by $\sigma$ on 
the elementary $p$-abelian quotient $G/\Phi(G)$ is inversion.

We now consider certain special finite quotients of
a finitely generated pro-$p$ group, namely their maximal quotients of a fixed $p$-class. To
define this, let $P_0(G) = G$ and, for $n \geq 0$, $P_{n+1}(G)$ denote
the (closed) subgroup generated by $[G,P_n(G)]$ and $P_n(G)^p$. The
groups $P_0(G) \geq P_1(G) \geq P_2(G) \geq \ldots$ form a descending
chain of characteristic subgroups of $G$ called the lower $p$-central series.  
Note that $P_1(G)$ is the
Frattini subgroup $\Phi(G)$.  The $p$-class $c$ of a finite $p$-group
$G$ is defined to be the smallest $n\geq 0$ for which $P_n(G) = \{1\}$.  If $N$ is a
normal subgroup of $G$, and $G/N$ has $p$-class $n$, then $P_n(G)\leq
N$.  Thus, if $G$ has $p$-class $c$, then for $n=0,\ldots, c$, the
maximal $p$-class $n$ quotient of $G$ is $G/P_n(G)$.  

Suppose $G$ has $p$-class $c$. A pro-$p$ group $H$ satisfying $H/P_c(H)
\cong G$ is called a {\em descendant} of $G$ and if, additionally, $H$
has $p$-class $c+1$, then $H$ is called a {\em child}, or 
{\em immediate descendant}, of $G$.  O'Brien \cite{O}
produced an algorithm that computes all children (and so
ultimately all descendants of any finite $p$-class) of a given $p$-group. 
It will be important
for us to give much consideration to the maximal $p$-class $n$ quotients
of Schur $\sigma$-groups so we make the following definition.

\begin{definition}
Let $G$ be a finite $p$-group of $p$-class $c$.  We say that $G$ is
a {\em \SSA} if it is the maximal $p$-class $c$ quotient of a Schur $\sigma$-group.
Note that this terminology has the slightly unorthodox meaning in that
every Schur $\sigma$-group is itself a \SSA.
\end{definition}

For the O'Brien $p$-group generation algorithm, two invariants of a
$p$-group $G$ play important roles namely its $p$-multiplicator rank
and its nuclear rank. We now recall their definitions and some of their
important properties. Suppose $G$ is a $p$-group with $d(G)=g$ and
presentation $1 \to R \to F \to G \to 1$; recall that $F$ is the free
pro-$p$ group on $g$ generators $x_1, \ldots, x_g$. The isomorphism
class of the objects we are about to define do not depend on the
choice of presentation. The \textit{$p$-covering group} $G^*$ of $G$
is $F/R^*$ where $R^*$ is the topological closure of $R^p[F,R]$.  The
\textit{$p$-multiplicator} of $G$ is defined to be the subgroup
$R/R^*$ of $G^*$, and the \textit{nucleus} of $G$ is $P_c(G^*)$ where
$c$ is the $p$-class of $G$. The nucleus is a subgroup of the
$p$-multiplicator.  We call the dimension of $R/R^*$ the
\textit{$p$-multiplicator rank}; the dimension of the subgroup
$P_c(G^*)$ is called the \textit{nuclear rank}.  If a
group has nuclear rank $0$, then it has no children and is called {\em terminal}.  

\begin{remark}
In  \cite{O}, the quantities introduced above are shown to be well-defined with respect to any choice of abstract presentation for a finite
$p$-group $G$ rather than for pro-$p$ presentations. This switch does not cause any problems since if $E$ is an abstract free
group on the same finite generating set as $F$ then one can show that $E/P_c(E) \cong F/P_c(F)$ for all $c \geq 1$. If $G$ has $p$-class $c$ then this isomorphism can be used to show that there is a one-to-one correspondence between the normal subgroups $M$ of $E$ with $E/M \cong G$  and the (open) normal subgroups $N$ of $F$ with $F/N \cong G$. The subgroups $M$ and $N$ are free with the same generator ranks since the Schreier index formula applies in both the abstract and pro-$p$ setting. It follows that $M/M^* \cong N/N^*$ and so the definition of the $p$-multiplicator rank is independent of whether one uses an abstract or pro-$p$ presentation for $G$.
Furthermore, one can see that $E/M^* \cong F/N^*$ since both quotients are finite $p$-groups which are $p$-covering groups for $G$. Thus the definition of the nucleus is also independent of whether one uses an abstract or pro-$p$ presentation for $G$.
\end{remark}

A \SSA group of $p$-class $c$ which is terminal has no proper
descendants but must be $H/P_c(H)$ for some Schur $\sigma$-group $H$;
hence it is a Schur $\sigma$-group.  Thus, terminal \SSA groups are
always Schur $\sigma$-groups.  In the other direction, in the
appendix, Blackhurst proves that a non-cyclic $p$-group with trivial
Schur multiplier must be terminal; this is a result to which several
authors have referred, but there appears to be no proof in the
literature.  Since Schur $\sigma$-groups satisfy $r(G)=d(G)$, they
have trivial Schur multiplier, hence finite non-cyclic Schur $\sigma$-groups are
terminal.  In summary, terminal \SSA groups are precisely finite non-cyclic Schur $\sigma$-groups.

\subsection{Measures of $p$-groups}
Let $F_c = F/P_c(F)$  where $c$ is any positive integer with GI-automorphism $\sigma$ induced by the GI-automorphism $\sigma$ on $F$ defined previously. As an analogue of $X \subset \Phi(F)$, we introduce $X_c \subset \Phi(F_c)$ by defining
$$ X_c = X(\Phi(F_c),\sigma) = \{s \in \Phi(F_c)  \mid  \sigma(s) = s^{-1} \}.   $$
Let $G$ be a finite $p$-group of $p$-class $c$ with generator and relation ranks both equal to $g$. One can see that $G$ is a quotient of $F_{c'}$ for all $c' \geq c$. We will say that the tuple of elements $v = (t_1,\ldots,t_g)
\in\Phi(F_{c'})^g$ {\em presents $G$} if $F_{c'} / \langle v \rangle \cong
G$ where $ \langle v \rangle$ denotes the closed normal subgroup of $F_{c'}$
generated by $t_1,\ldots,t_g$. 
We let $S_{c'} = S_{c'}(G)$ denote the set of all such tuples in $\Phi(F_{c'})^g$.  
If $G$ is a \SSA then from Koch-Venkov's results on
presentations of Schur $\sigma$-groups discussed earlier, it follows
that there always exists a tuple $w \in X_{c'}^g$ which presents $G$. 
Just take the image of a tuple of relations in $X^g$ for a Schur $\sigma$-group descended from $G$ under the map induced by the natural projection from $F$ to $F_{c'}$.
We let $T_{c'} = T_{c'}(G)$ denote the set of all such tuples in $X_{c'}^g$. 
Observe that $T_{c'} \subseteq S_{c'}$. 

\begin{definition}
Let $G$ be a \SSA of $p$-class $c$ and generator rank $g$. For $c' \geq c$, we define the {\em $c'$-measure of $G$} by
\[ \mathrm{Meas}_{c'}(G) = \frac{|T_{c'}|}{|X_{c'}|^g}.  \]
\end{definition}
We view the $c'$-measure of a \SSA group $G$ as the probability with which that group arises as a quotient of $F_{c'}$ when one selects
a tuple of relations at random from $X_{c'}^g$.  Shortly, we will examine the sequence $( \mathrm{Meas}_{c'}(G) )_{c' \geq c} $.

\begin{example}\label{ex-p3g2c2}
As an example, let $p=3$ and consider the case where $g = 2$ and $c' = c = 2$.
O'Brien's algorithm yields seven finite $2$-generated
$3$-groups of $3$-class $2$, of which three are \SSA groups. In this case, $F_2 = F/P_2(F)$
has order $3^5$ and we calculate that the set $X_2$ is an elementary
abelian subgroup of order $9$.  Of these three \SSA groups, the one of
order $27$ - call it $G_1$ - arises when the ordered $2$-tuple taken
from $X_2$ generates $X_2$. This happens for $48$ of the $81$ ordered
$2$-tuples.  Thus $\mathrm{Meas}_2(G_1) = 16/27$.  The second group,
of order $81$ - call it $G_2$ - arises when the ordered $2$-tuple
generates one of the $4$ subgroups of $X_2$ of order $3$. Each of
these four subgroups is generated by 8 of the $81$ ordered $2$-tuples in
$X_2 \times X_2$; hence, $\mathrm{Meas}_2(G_2) = 32/81$.  The third
group, of order $243$ - call it $G_3$ - is $F_2$ itself and arises
when both entries in the $2$-tuple are trivial. Therefore,
$\mathrm{Meas}_2(G_3) = 1/81$. Note that $\mathrm{Meas}_3$ of each
of these groups is $0$. An explanation for this will be given shortly.
\end{example}

\begin{remark}
In the above example, $X_{c'}$ happened to be a subgroup; in general, $X$
and $X_{c'}$ are not subgroups.
\end{remark}
 
\begin{lemma}\label{phi-map1}
For all  $d \geq 1$, we have $X_d = X_d'$ where 
\[ X_d' =  \{t^{-1}\sigma(t) \mid  t \in \Phi(F_d)\}. \]
Hence, for all $g \geq 1$, the map $\phi_{d}: \Phi(F_{d})^g
\rightarrow X_{d}^g$ defined by $(t_1,\ldots,t_g) \mapsto (t_1^{-1}
\sigma(t_1),\ldots, t_g^{-1}\sigma(t_g))$ is surjective.  Furthermore,
for each $w \in X_{d}^g$, the fiber $\phi_{d}^{-1}(w)$ is a coset of
$\A_{d}^g$ in $\Phi(F_{d})^g$ where $\A_{d} = \A(F_{d},\sigma)$.
\end{lemma}
\begin{proof}
It is easy to verify that $X_d' \subseteq X_d$. For the reverse
direction, we consider the map $X_d \rightarrow X_d'$ defined by $t
\mapsto t^{-1}\sigma(t) = t^{-2}$. This map is injective since $p$ is
odd and $F_d$ is a finite $p$-group.  It follows that $|X_d| \leq
|X_d'|$ and hence we must have equality $X_d = X_d'$.

The statement that the fibers of $\phi_d$ are cosets is straightforward and makes
use of the fact that $\A_d \subseteq \Phi(F_d)$.
\end{proof}

\begin{remark}
Using the fact $X_d = X_d'$ for all $d$, one can now show that $X =
X'$ where $X' = \{ t^{-1} \sigma(t) \mid t \in \Phi(F) \}$. Since both
sets are closed in $F$, it suffices to prove that $\psi_d(X) =
\psi_d(X')$ for all $d$ where $\psi_d: F \rightarrow F_d$ is the
natural projection. It is easy to see that $X' \subseteq X$ and hence
$\psi_d(X') \subseteq \psi_d(X)$. It follows that
\[     X_d' = \psi_d(X') \subseteq \psi_d(X) \subseteq X_d = X_d' \]
and hence the two the middle containments are also equalities.
\end{remark}

\begin{theorem}\label{thm-meas-relations}
Let $G$ be a \SSA group of $p$-class $c$.
\begin{itemize}
\item[(i)]  We have
\[  \Meas_c(G) = \Meas_{c+1}(G) + \sum_Q \Meas_{c+1}(Q) \]
where the summation is over all immediate descendants $Q$ of $G$ which
are \SSA groups.
\item[(ii)] $\Meas_{c'}(G) = \Meas_{c+1}(G)$ for all $c' \geq c + 1$.
\end{itemize}
\end{theorem}
\begin{proof} 
It follows from Lemma~\ref{phi-map1} that fibers over individual
elements for the maps $\phi_c: \Phi(F_c)^g \rightarrow X_c^g$ and
$\phi_{c+1}: \Phi(F_{c+1})^g \rightarrow X_{c+1}^g$ are uniform in
size. The same statement holds for the natural projection $\psi:
\Phi(F_{c+1})^g \rightarrow \Phi(F_c)^g$. We have an induced map
$\psibar: X_{c+1}^g \rightarrow X_c^g$ obtained by restricting $\psi$
to the subset $X_{c+1}^g \subseteq \Phi(F_{c+1})^g$. It is also
surjective and must have fibers that are uniform in size since
$\psibar \circ \phi_{c+1} = \phi_c \circ \psi$. Thus, we have
\[ 
\Meas_c(G) = \frac{|T_c|}{ |X_c|^g} = \frac{|\psibar^{-1}(T_c)|}{
  |\psibar^{-1}(X_c^g)|} = \frac{|\psibar^{-1}(T_c)|}{ |X_{c+1}|^g}.
\]
The statement in part (i) will follow once we show $\psibar^{-1}(T_c)
= \psibar^{-1}(T_c(G)) = T_{c+1}(G) \cup \bigcup_Q(T_{c+1}(Q))$ where
$Q$ runs through the immediate descendants of $G$. Note that the union
is disjoint by definition of $T_{c+1}$ and $T_{c+1}(Q) = \emptyset$ if
$Q$ is not a \SSA group.

We now check containment in both directions. If $w \in T_{c+1}(G) \cup
\bigcup_Q(T_{c+1}(Q))$ and $\langle w \rangle$ is the normal subgroup
of $F_{c+1}$ generated by $w$, then $H = F_{c+1}/\langle w \rangle$ is
isomorphic to $G$ or an immediate descendant $Q$.  In either case, $H
/ P_c(H) \cong G$ and so $\psibar(w) \in T_c(G)$.  This follows since
the normal subgroup $\langle \psibar(w) \rangle$ in $F_c$ is equal to
the image of the normal subgroup $\langle w \rangle$ under the natural
epimorphism $F_{c+1} \rightarrow F_c$, and thus
\[ F_c/\langle \psibar(w) \rangle \cong F_{c+1} / 
\langle w \rangle P_c(F_{c+1}) \cong H / P_c(H) \cong G. \]

For the other direction, suppose that one has a tuple in $w \in
X_{c+1}^g$ with $\psibar(w) \in T_c(G)$. Then $H = F_{c+1}/\langle w
\rangle$ has $p$-class at most $c+1$ and $H/P_c(H) \cong G$. This last
part follows again since $\langle \psibar(w) \rangle$ is equal to the
image of $\langle w \rangle$ under the natural epimorphism $F_{c+1}
\rightarrow F_c$.  We deduce that $H$ is either $G$ or an immediate
descendant and so by definition $w \in T_{c+1}(G)$ or $w \in
T_{c+1}(Q)$ for some immediate descendant $Q$.

The proof of part (ii) reduces to verifying that
$\psibar^{-1}(T_{c+1}(G)) = T_{c'}(G)$ where $\psibar: X_{c'}^g
\rightarrow X_{c+1}^g$ is the restriction of the natural epimorphism
$\psi: \Phi(F_{c'})^g \rightarrow \Phi(F_{c+1})^g$. Verifying the
containment $T_{c'}(G) \subseteq \psibar^{-1}(T_{c+1}(G))$ is
straightforward. For the reverse direction we must make use of the
assumption that $G$ has $p$-class $c$. Suppose that $w \in X_{c'}^g$
and $\psibar(w) \in T_{c+1}(G)$. We wish to show that $w \in
T_{c'}(G)$. Let $v \in X^g \subseteq F^g$ be a lift of $w$ under the
natural epimorphism $F^g \rightarrow F_{c'}^g$ and consider the Schur
$\sigma$-group $\hat{G} = F/\langle v \rangle$. Let $\hat{G}_d$ denote
the quotient $\hat{G} / P_d(\hat{G})$. Then we have $\hat{G}_{c+1}
\cong G \cong \hat{G}_c$ since $\psibar(w) \in T_{c+1}(G)$ and $G$ has
$p$-class $c$. Equivalently, $P_{c+1}(\hat{G}) = P_c(\hat{G})$. An
inductive argument now shows that $\hat{G}_{d} \cong G$ for all $d
\geq c$. In particular, $F_{c'}/\langle w \rangle \cong \hat{G}_{c'}
\cong G$ which shows that $w \in T_{c'}(G)$ as desired.
\end{proof}

\begin{remark}
We have $\Meas_1(G) = 1$ when $G$ is the elementary abelian $p$-group
of generator rank $g$. One can now use
Theorem~\ref{thm-meas-relations} to see that, for each $c \geq 1$,
$\Meas_c(G)$ defines a discrete probability measure on the set of
maximal $p$-class $c$ quotients of all Schur $\sigma$-groups of
generator rank $g$. This finite set of groups consists of the Schur
$\sigma$-ancestor groups of $p$-class exactly $c$, together with all
Schur $\sigma$-groups of $p$-class less than $c$.
\end{remark}

\begin{definition}
Let $G$ be a \SSA group of $p$-class $c$. We define the {\em measure
  of $G$} (denoted $\Meas(G)$) to be the common value of
$\Meas_{c'}(G)$ for $c' \geq c+1$.
\end{definition}

\begin{theorem}
\label{remark-meas-relations}
Let $G$ be a \SSA group of $p$-class $c$.  
\begin{enumerate}
\item If $G$ is a non-cyclic Schur $\sigma$-group, then $\Meas(G)=\Meas_c(G)>0$. 
\item If $G$ is not a Schur $\sigma$-group, then $\Meas(G)=0$ and $\Meas_c(G)$ is the sum of 
the $c+1$-measures of its immediate descendants.
\end{enumerate}
\end{theorem}
\begin{proof}
If $G$ is a non-cyclic Schur $\sigma$-group of $p$-class $c$ then, as discussed
in Section~\ref{subsection-prelim}, it has no descendants and so by
part (i) of Theorem~\ref{thm-meas-relations} we see that $\Meas_c(G) =
\Meas_{c+1}(G)$. It follows that $\Meas(G) = \Meas_c(G) > 0$ since
$T_c(G) \neq \emptyset$.

On the other hand, if $\Meas(G) = \Meas_{c+1}(G) > 0$ then $T_{c+1}(G)
\neq \emptyset$. Let $w \in T_{c+1}(G)$ and consider a lift $u \in
X^g$ and the Schur $\sigma$-group $\hat{G} = F/\langle u \rangle$. The
arguments in the proof of part (ii) of
Theorem~\ref{thm-meas-relations} now show that $G \cong \hat{G}_c
\cong \varprojlim \hat{G}_{c'} = \hat{G}$. Hence $G$ itself is a Schur
$\sigma$-group of $p$-class $c$. Thus if $G$ is a \SSA of $p$-class
$c$ which is not a Schur $\sigma$-group then $\Meas_{c+1}(G) = 0$ and
so $\Meas_c(G)$ is the sum of the $c+1$-measures of its immediate
descendants by part (i) of Theorem~\ref{thm-meas-relations},
\end{proof}

\subsection{Measures of abelian $p$-groups}
\label{section-abelian}

We are now going to define analogous measures on the class of finite
abelian $p$-groups and relate these to the measures introduced
above. This will be used to justify the assertion that our conjectures
in the non-abelian setting generalize the Cohen-Lenstra heuristics for
$p$-class groups.

In what follows, the role of $F$ and $F_c$ will be played by the
abelianizations $F^{ab}$ and $F_c^{ab}$. Note that $(F_c)^{ab} \cong
(F^{ab})_c$.  Every abelian pro-$p$ group $G$ comes equipped with a
unique $\sigma$-automorphism, namely the inversion mapping $x \mapsto
x^{-1}$.  We define sets $X^{ab}$ and $X_c^{ab}$ in an analogous way
to $X$ and $X_c$ but things are now simpler and it is easy to verify
that $X^{ab} = \Phi(F^{ab})$ and $X_c^{ab} = \Phi(F_c^{ab})$.

Let $G$ be a finite abelian $p$-group of $p$-class $c$ with generator
rank $g$ and let $c' \geq c$.  We will say that the tuple of elements
$v = (t_1,\ldots,t_g) \in\Phi(F_{c'}^{ab})^g$ {\em presents $G$} if
$F_{c'}^{ab} / \langle v \rangle \cong G$ where $ \langle v \rangle$
denotes the (normal) subgroup of $F_{c'}^{ab}$ generated by
$t_1,\ldots,t_g$. Such tuples must exist since $G$ is finite.  We let
$S_{c'}^{ab} = S_{c'}^{ab}(G)$ denote the set of all such tuples in
$\Phi(F_{c'}^{ab})^g$.  In the non-abelian setting, we introduced a
second set of tuples $T_{c'} \subseteq S_{c'}$.  We can do the same in
the abelian setting, but the situation now is simpler and we have
$T_{c'}^{ab}$ = $S_{c'}^{ab}$ since $X_{c'}^{ab} = \Phi(F_{c'}^{ab})$.

\begin{definition}
Let $G$ be an abelian $p$-group of $p$-class $c$ and generator rank
$g$. For $c' \geq c$, we define the {\em abelian $c'$-measure of $G$}
by
\[ \mathrm{Meas}_{c'}^{ab}(G) = \frac{|T_{c'}^{ab}|}{|X_{c'}^{ab}|^g}  \left(=  \frac{|S_{c'}^{ab}|}{|\Phi(F_{c'}^{ab})|^g}\right).  \]
\end{definition}
We view the abelian $c'$-measure of a finite $p$-group $G$ as the
probability with which that group arises as a quotient of
$F_{c'}^{ab}$ when one selects a tuple of relations at random from
$(X_{c'}^{ab})^g = \Phi(F_{c'}^{ab})^g$.

\begin{theorem}\label{thm-meas-relations-ab}
Let $G$ be an abelian $p$-group of $p$-class $c$.
\begin{itemize}
\item[(i)]  We have
\[  \Meas_c^{ab}(G) = \Meas_{c+1}^{ab}(G) + \sum_Q \Meas_{c+1}^{ab}(Q) \]
where the summation is over all immediate abelian descendants $Q$ of $G$.
\item[(ii)] $\Meas_{c'}^{ab}(G) = \Meas_{c+1}^{ab}(G)$ for all $c' \geq c + 1$.
\end{itemize}
\end{theorem}
\begin{proof} The proof is carried out in exactly the same fashion as the proof of Theorem~\ref{thm-meas-relations}. We omit the details.
\end{proof}

\begin{definition}
Let $G$ be an abelian $p$-group of $p$-class $c$. We define the {\em abelian measure of $G$} (denoted $\Meas^{ab}(G)$) to be the common value of $\Meas_{c'}^{ab}(G)$ for $c' \geq c+1$.
\end{definition}
\begin{remark}
It follows from part (i) of Theorem~\ref{thm-meas-relations-ab} that if $G$ is an abelian $p$-group of $p$-class $c$ then
\[ \Meas^{ab}(G) = \Meas_c^{ab}(G) - \sum_Q \Meas_{c+1}^{ab}(Q) \]
where the summation is over all abelian groups $Q$ of $p$-class $c+1$ with $Q/Q^{p^c} \cong G$;  here $Q^{p^c}$ is the subgroup of $Q$ generated by all $p^c$-th powers.
\end{remark}
The following theorem and its corollary provide the link between $\Meas_{c}^{ab}$ and $\Meas_c$.

\begin{theorem}
Let $G$ be an abelian $p$-group of $p$-class $c$. For all $c' \geq c$ we have
\[ \Meas_{c'}^{ab}(G) = \sum_Q \Meas_{c'}(Q) \]
where the summation is over all \SSA groups $Q$ with $p$-class at most $c'$ and $Q^{ab} \cong G$.
\end{theorem}
\begin{proof} 
It follows from  Lemma~\ref{phi-map1} that the fibers of the map $\phi_{c'}: \Phi(F_{c'})^g \rightarrow
X_{c'}^g$  are uniform in size. The same statement holds for the analogous map on the abelian side, namely
$\phi_{c'}^{ab}: \Phi(F_{c'}^{ab})^g \rightarrow (X_{c'}^{ab})^g = \Phi(F_{c'}^{ab})^g$ given by $(t_1,\ldots, t_g) \mapsto
(t_1^{-1} \sigma(t_1), \ldots, t_g^{-1} \sigma(t_g)) = (t_1^{-2}, \ldots, t_g^{-2})$. Indeed, the latter map is a bijection since $p$ is odd. We also have a projection map $\psi: \Phi(F_{c'})^g \rightarrow \Phi(F_{c'}^{ab})^g$ and its restriction $\psibar: X_{c'}^g \rightarrow (X_{c'}^{ab})^g = \Phi(F_{c'}^{ab})^g$.  Since the projection $\psi$ has uniform fibers and $\psibar \circ \phi_{c'} = \phi_{c'}^{ab} \circ \psi$, we see that $\psibar$ is also onto and has uniform fibers. Thus
\[ 
\Meas_{c'}^{ab}(G) = \frac{|T_{c'}^{ab}|}{ |X_{c'}^{ab}|^g} = \frac{|\psibar^{-1}(T_{c'}^{ab})|}{
  |\psibar^{-1}((X_{c'}^{ab})^g)|} = \frac{|\psibar^{-1}(T_{c'}^{ab})|}{ |X_{c'}|^g}.
\]
We have $(F_{c'}/\langle w \rangle)^{ab} \cong F_{c'}^{ab}/ \langle \psibar(w) \rangle$ for all $w \in X_{c'}^g$. 
If $w \in \psibar^{-1}(T_{c'}^{ab}) = \psibar^{-1}(T_{c'}^{ab}(G))$ then by definition  $F_{c'}^{ab}/ \langle \psibar(w) \rangle \cong G$ and so $(F_{c'}/\langle w \rangle)^{ab} \cong G$ 
which means $w \in T_{c'}(Q)$ for the \SSA group $Q = F_{c'}/\langle w \rangle$ and we have $Q^{ab} \cong G$. Conversely, the same isomorphisms show that if $w \in  T_{c'}(Q)$ for a \SSA group $Q$ with $Q^{ab} \cong G$ then $w \in  \psibar^{-1}(T_{c'}^{ab}(G))$. Thus we have
$\psibar^{-1}(T_{c'}^{ab}(G)) = \bigcup_Q T_{c'}(Q)$ where $Q$ runs through the \SSA groups of $p$-class at most $c'$ with $Q^{ab} \cong G$. The union is disjoint so the statement about the measures now follows.
\end{proof}
If $Q^{ab} \cong G$ and $G$ has $p$-class $c$ then $Q$ must have $p$-class at least $c$. We thus have the following corollary.
\begin{corollary}
Let $G$ be an abelian $p$-group of $p$-class $c$. Then
\[ \Meas_c^{ab}(G) = \sum_Q \Meas_c(Q) \]
where the summation is over all \SSA groups $Q$ with $p$-class exactly $c$ that satisfy $Q^{ab} \cong G$.
\end{corollary}

\subsection{Formulas for $\Meas_c^{ab}$ and  $\Meas_c$}
\label{subsection-formulas}

We will now derive formulas for the various measures introduced so far. The formula
for $\Meas^{ab}$ in the next theorem can be found as Theorem 6.3, p.49 of \cite{CL2}. 
We give a detailed derivation here in order to lay the groundwork for the proof of Theorem~\ref{main-group-theory} which is structured the same way and begins with the same counting argument.

\begin{theorem}\label{main-group-theory-ab}
Let $G$ be an abelian $p$-group of $p$-class $c$ and generator rank $g$. We have
\begin{equation*}
\Meas_c^{ab}(G)  
= \frac{1}{|\Aut(G)|} \,
p^{g^2}
\prod_{k=1}^g (1 - p^{-k}) \prod_{k={1 + g - u}}^g (1 - p^{-k})
\end{equation*}
where $u$ counts the number of cyclic groups of order strictly less than $p^c$ in the direct product decomposition of $G$. \\
For $c' > c$, we have
\begin{eqnarray*}
\Meas_{c'}^{ab}(G) = \Meas^{ab}(G) = 
\frac{1}{|\Aut(G)|}
\,
p^{g^2}
\prod_{k=1}^g (1 - p^{-k})^2 .
\end{eqnarray*}
\end{theorem}
\begin{proof}
To compute $\Meas_c^{ab}(G)$ we need to count tuples of relations in $\Phi(F_c^{ab})^g$ which present $G$. We will do this in two stages
by following the same strategy as in~\cite{B}. First, we will count the number of normal subgroups $\Rbar$ in $F_c^{ab}$ with $F_c^{ab}/\Rbar \cong G$ by counting certain collections of epimorphisms. Then we will count the number of generating tuples that generate each such subgroup as a normal subgroup although the normality condition imposes no restriction here since $F_c^{ab}$ is abelian.

Let $\mathrm{Epi}(F,G)$ be the set of epimorphisms from $F$ to $G$ where $F$ is the free pro-$p$ group
on $g$ generators.
Such epimorphisms are in one-to-one correspondence with ordered
$g$-tuples of elements in $G$ that generate $G$.  By Burnside's basis theorem, 
a tuple of elements generates $G$ if and only if it generates $G/\Phi(G)$. It follows that
\[ |\mathrm{Epi}(F,G)| = |\Phi(G)|^g \,
(p^g-p^{g-1})(p^g-p^{g-2}) \dots (p^g-1) = |\Phi(G)|^g \prod_{k=1}^g (p^g - p^{g-k})\]
since $G/\Phi(G)$ is an $\mathbb{F}_p$-space of dimension $g$.

Two epimorphisms have the
same kernel if and only if they differ by an automorphism of $G$, so
dividing by $|\Aut(G)|$ gives the number of (closed) normal subgroups
$R$ of $F$ with quotient isomorphic to $G$. Since $G$ is abelian and has $p$-class $c$ we have
 $P_c(F)[F,F] \subseteq R$ for each such subgroup $R$ and there is 
a one-to-one correspondence between these subgroups of $F$
and the subgroups $\overline{R}$ of $F_c^{ab}$ such that $F_c^{ab}/\overline{R} \cong G$. Thus the number of such subgroups
$\Rbar$ is
\[ \frac{|\mathrm{Epi}(F,G)|}{|\Aut(G)|} =\frac{ |\Phi(G)|^g}{|\Aut(G)|} \prod_{k=1}^g (p^g - p^{g-k}). \]

Now we need to count how many $g$-tuples of elements generate each $\Rbar$ as a (normal) subgroup of $F_c^{ab}$.
A $g$-tuple of elements generates $\overline{R}$ as a subgroup of $F_c^{ab}$ if
and only if their images generate the $\mathbb{F}_p$-space $V = \Rbar/\Phi(\Rbar)$. Since $F_c^{ab} \cong F^{ab}/(F^{ab})^{p^c}$ is a product of $g$ copies of $\mathbb{Z}_{p^c}$, the dimension of $V$ is equal to the number of cyclic factors in the decomposition of the abelian group $G$ which are strictly smaller than $\mathbb{Z}/{p^c\mathbb{Z}}$. This is the quantity $u$ in the statement of the theorem. There are $\prod_{k=1}^u (p^g - p^{u-k})$ $g$-tuples of elements in $V$ which span this space and hence 
\[   |\Phi(\Rbar)|^g \prod_{k=1}^u (p^g - p^{u-k}) \]
$g$-tuples that generate each subgroup $\Rbar$.  Note that $|\Phi(\Rbar)| = |F_c^{ab}|/[F_c^{ab} : \Phi(\Rbar)] = |F_c^{ab}| / (|G| p^u)$ so this quantity is independent of the particular subgroup $\Rbar$ being considered.

Combining the statements above, we have
\begin{eqnarray*}
\mathrm{Meas}_c^{ab}(G) = \frac{|S_c^{ab}(G)|}{|\Phi(F_c^{ab})|^g} 
&=& \frac{1}{|\Phi(F_c^{ab})|^g}  \frac{ |\Phi(G)|^g}{|\Aut(G)|} \prod_{k=1}^g (p^g - p^{g-k}) |\Phi(\Rbar)|^g \prod_{k=1}^u (p^g - p^{u-k}) \\
&=& \frac{1}{|\Phi(F_c^{ab})|^g}  \frac{ (|\Phi(F_c^{ab})|/|\Rbar|)^g}{|\Aut(G)|} \prod_{k=1}^g (p^g - p^{g-k}) \frac{|\Rbar|^g}{p^{gu}} \prod_{k=1}^u (p^g - p^{u-k}) \\
&=& \frac{1}{|\Aut(G)|} \,
\frac{1}{p^{g u}}
\prod_{k=1}^g (p^g - p^{g-k}) \prod_{k=1}^u (p^g - p^{u-k}) \\
&=& \frac{1}{|\Aut(G)|} \,
p^{g^2}
\prod_{k=1}^g (1 - p^{-k}) \prod_{k={1 + g - u}}^g (1 - p^{-k})
\end{eqnarray*}

The second statement about $\Meas_{c'}^{ab}(G)$ for $c' > c$ is verified in exactly the same way. The only difference occurs in the second step. One sees that the space $V = \Rbar / \Phi(\Rbar)$ has dimension $g$ since $G$ has $p$-class $c$ which means that all $g$ of its cyclic components are strictly smaller that $\mathbb{Z}/{p^{c'} \mathbb{Z}}$. Thus the formula one obtains is the one above with $u = g$.
\end{proof}

\begin{remark}\label{remark-eta}
If we define $\eta_j(p) = \prod_{k=1}^j (1 - p^{-k})$ as in~\cite{CL2}, then the formulas in Theorem~\ref{main-group-theory-ab} can be written
\begin{eqnarray*}
\mathrm{Meas}_c^{ab}(G) &=& 
\frac{1}{|\Aut(G)|} \,
p^{g^2} \left(\frac{ \eta_g(p)^2}{\eta_{g-u}(p) }\right) \\
\mathrm{Meas}^{ab}(G) &=& \frac{1}{|\Aut(G)|} \, p^{g^2} \eta_g(p)^2.
\end{eqnarray*}
\end{remark}

To derive similar formulas for the measures in the non-abelian context, we need an additional technical assumption on the groups involved. 
Recall that $F$ is the free pro-$p$ group of generator rank $g$. Let $G$ be a \SSA group of $p$-class $c$ with generator rank $g$. Given $w \in T_c(G)$, the normal subgroup $\langle w \rangle$ is the kernel of an epimorphism from $F_c$ to $G$ and satisfies $\sigma(\langle w \rangle) = \langle w \rangle$. In the lemma and
theorems which follow, we will need to make the much stronger
assumption that the kernel of {\em every} epimorphism from $F_c$ to
$G$ is invariant under $\sigma$. Or, equivalently, that the kernel of every epimorphism from $F$ to $G$ is invariant under $\sigma$. 
\begin{definition}
\label{kip}
If $G$ is a finite $p$-group with the same generator rank as the free group $F$ and $\sigma(\ker \psi) = \ker \psi$ for every epimorphism $\psi: F \rightarrow G$ then we will say that $G$ satisfies the {\em kernel invariance property} (KIP). 
\end{definition}
Some additional remarks about this property and its range of applicability will be made later in Section~\ref{subsection-kip}.

\begin{lemma}\label{phi-map2}
Let $G$ be a \SSA group of $p$-class $c$ satisfying KIP. Let $c' \geq c$ and define $\phi_{c'}$ and $\A_{c'}$ as in Lemma~\ref{phi-map1}. The following statements hold.
\begin{itemize}
\item[(i)] If $v \in S_{c'}$ then $\langle \phi_{c'}(v) \rangle = \langle v \rangle$ and $\phi_{c'}(v) \in T_{c'}$.
\item[(ii)] If $w \in T_{c'}$ then there exists $v \in S_{c'}$ such that $\phi_{c'}(v) = w$.
\item[(iii)] Let $v \in S_{c'}$ and $\Rbar = \langle v \rangle$. We have $u \in S_{c'} \cap \phi_{c'}^{-1}(v)$ if and only if  $u v^{-1} \in (\A_{c'} \cap \Rbar)^g$.
\item[(iv)] $[\A_{c'} : \A_{c'} \cap \Rbar] = \see(G)$ where $\Rbar$ is the kernel of
  any epimorphism from $F_{c'}$ to $G$.
\end{itemize}
\end{lemma}
\begin{proof}
First, note that since $G$ has $p$-class $c$, the kernel of every epimorphism from $F_{c'}$ to $G$ where $c'
\geq c$ must be invariant under $\sigma$ since each such epimorphism
is induced by an epimorphism from $F$ to $G$ for which the kernel is invariant by assumption. 
This form of KIP is used below and in some of the later proofs in this section.

For part (i), suppose $v =(t_1,\ldots,t_g) \in S_{c'}$. Then the normal subgroup $\langle v \rangle$
is the kernel of an epimorphism $F_{c'} \rightarrow G$ and so is
invariant under $\sigma$ by assumption. It follows that $t_i^{-1}
\sigma(t_i) \in \langle v \rangle$ for all $i$ and so $\langle \phi_{c'}(v)
\rangle \subseteq \langle v \rangle$. This means there is a natural
epimorphism $F_{c'}/\langle \phi_{c'}(v) \rangle \rightarrow F_{c'}/\langle v
\rangle \cong G$. Applying Lemma~4.10 in~\cite{BE} to this epimorphism, we see
that it must be an isomorphism and so $\langle \phi_{c'}(v) \rangle =
\langle v \rangle$ which means $\phi_{c'}(v)$ also presents $G$.

For part (ii), we again make use of the fact that $p$ is odd. We
restrict the map $\phi_{c'}$ to $X_{c'}^g \subseteq \Phi(F_{c'})^g$. One
observes that the restriction $\phi: X_{c'}^g \rightarrow X_{c'}^g$ is
the powering map $t \mapsto t^{-2}$ in each component. If one chooses
$n$ such that $(-2)^n \equiv 1$ modulo the exponent of the group
$F_{c'}$ then the $n$th iterate of this map is the identity. Starting
with any tuple $w \in T_{c'} \subseteq X_{c'}^g$  we then have $w =
\phi_{c'}^n(w) = \phi_{c'}( \phi_{c'}^{n-1}(w))$. Taking $v = \phi_{c'}^{n-1}(w)$ we note
that $v$ must also present $G$ by repeated application of part (i).

For part (iii), let $v =(t_1,\ldots,t_g) \in S_{c'}$. We can apply part (i) to see that if $u \in S_{c'}$  
and $\phi_{c'}(u) = \phi_{c'}(v)$ then $\langle u \rangle = \langle \phi_{c'}(u)
\rangle = \langle \phi_{c'}(v) \rangle = \langle v \rangle = \Rbar$. By
Lemma~\ref{phi-map1}, we can write $u = (y_1 t_1,\ldots, y_g t_g)$ with $y_i \in
\A_{c'}$ for all $i$. Combining the previous two statements, we deduce that
$y_i \in \A_{c'} \cap \Rbar$ for all~$i$ and so $u v^{-1} \in (\A_{c'} \cap \Rbar)^g$.

Conversely, if we let $u = (y_1 t_1,\ldots, y_g t_g)$ for any
$g$-tuple of elements $(y_1,\ldots,y_g) \in (\A_{c'} \cap \Rbar)^g$ then
$\phi_{c'}(u) = \phi(v)$ and  $u$ can be seen to present $G$ as follows.
That $t_1,...,t_g$ generate $\overline{R}$ as a normal subgroup of
$F_c$ is equivalent to their images spanning the $\mathbb{F}_p$-space
$\Rbar/\Rbar^* \cong R/ P_c(F) R^*$. 
We note that the induced action of $\sigma$ on this vector space
is entirely by inversion. This follows by first using \cite{Gru}
p.100, Prop. 4, to identify the vector space with $H_2(G,\F_p)$.
Next, consider the homology long exact sequence associated to the
short exact sequence
$$ 0 \rightarrow \Z \rightarrow \Z \rightarrow \Z/p \rightarrow 0 $$

This is $$ ... \rightarrow H_2(G,\Z) \rightarrow H_2(G,\Z) \rightarrow
H_2(G,\Z/p) \rightarrow H_1(G,\Z) \rightarrow H_1(G,\Z) \rightarrow ... $$
which yields the exact sequence
$$ 0 \rightarrow H_2(G,\Z)/pH_2(G,\Z) \rightarrow H_2(G,\Z/p) \rightarrow
H_1(G,\Z)[p] \rightarrow 0 $$
These maps are $\sigma$-equivariant and the 3rd and 4th terms have the same dimension 
over $\F_p$, implying that $H_2(G,\Z/p)$ is $\sigma$-isomorphic to
$H_1(G,\Z)[p]$. Since $G$ is finite, this in turn is $\sigma$-isomorphic
to $H_1(G,\Z/p)$, which by \cite{Gru}, p.99, Prop. 3, is $\sigma$-isomorphic to
$G/\Phi(G)$, on which $\sigma$ acts entirely by inversion.
So if $y_1,\ldots, y_g$ lie in $\A_{c'} \cap \Rbar$ then their images in
$\Rbar/\Rbar^*$ must be trivial, and so the images of the $y_i t_i$ for 
$i=1,\ldots, g$ will also span this space. It follows that $\langle u
\rangle = \langle v \rangle = \Rbar$ and so $u$ also presents $G$.
Thus we have $u \in S_{c'} \cap \phi_{c'}^{-1}(v)$.

Finally, part (iv) follows since if $G = F_{c'}/\Rbar$ then $\A_{c'}/ \A_{c'} \cap
\Rbar \cong \A_{c'} \Rbar / \Rbar = \A(G,\tau)$ where $\A(G,\tau)$ is defined with
respect to the $\sigma$-automorphism $\tau$ on $G$ induced by the
$\sigma$-automorphism on $F_{c'}$. An induced automorphism exists
since $\sigma(\Rbar) = \Rbar$ by assumption. To verify the last
equality, one checks containment in both directions. First, since $\A_{c'} =
\A(F_{c'}, \sigma)$ it is easy to see that $\A_{c'} \Rbar/ \Rbar \subseteq \A(G,\tau)$. For
the reverse containment, suppose that $x \in F_{c'}$ represents an
element $g \in \A(G,\tau)$, then $\sigma(x) = xr$ for some $r \in
\Rbar$. Since $x = \sigma^2(x) = x r \sigma(r)$ one sees that
$\sigma(r) = r^{-1}$. Using the fact that the map $s \mapsto s^2$ is a
bijection from $\Rbar$ to $\Rbar$ we can select $s \in \Rbar$ such
that $s^2 = r$. One can then verify that $x' = x s \in \A_{c'}$ and hence $g
= x \Rbar = x' \Rbar \in \A_{c'} \Rbar/ \Rbar$.

Although the set $\A(G,\tau)$ does depend on the choice of GI-automorphism
$\tau$, its size $\see(G)$ does not, as explained in Section~\ref{subsection-prelim}.
\end{proof}

Before stating the next theorem we need to define one additional quantity.
\begin{definition}
Let $G$ be a finite $p$-group. Define $\r(G)$ to be $p$-multiplicator rank of $G$ minus the nuclear rank of $G$. Equivalently, $\r(G)$ is the dimension of the $\mathbb{F}_p$-space $R/P_c(F) R^*$ where $G \cong F/R$.
\end{definition}
We note that for any finite $p$-group $G$ we have $\r(G) \geq 0$.  It is a fact that $r(H) \geq \r(G)$
 for any descendant $H$ of $G$ (Prop.~2 of \cite{BN}). In particular, if $G$ is $g$-generated and $\r(G) > g$ then $G$ and its descendants cannot be \SSA groups.

\begin{theorem}\label{main-group-theory}
Let $G$ be a \SSA group of $p$-class $c$ and rank $g$ satisfying KIP. Let $\r=\r(G)$ and $r = r(G)$. Then
\begin{eqnarray*}
\Meas_c(G) = 
\frac{\see(G)^g}{|\Aut(G)|}
\,
p^{g^2}
\prod_{k=1}^g (1 - p^{-k}) \prod_{k={1 + g - \r}}^g (1 - p^{-k})
\end{eqnarray*}
and for $c' > c$
\begin{eqnarray*}
\Meas_{c'}(G) = 
\frac{\see(G)^g}{|\Aut(G)|}
\,
p^{g^2}
\prod_{k=1}^g (1 - p^{-k}) \prod_{k={1 + g - r}}^g (1 - p^{-k})
\end{eqnarray*}
\end{theorem}

\begin{proof}
To compute $\Meas_c(G)$, we will first find the proportion of $g$-tuples of relators in $\Phi(F_c)$ that present $G$.
We will then modify this to obtain $\mathrm{Meas}_{c}(G)$. A similar argument yields the second formula.

For the first step, we use similar arguments as in Theorem~\ref{main-group-theory-ab}.
If $G = F/R$ has $p$-class $c$ then $P_c(F) \subseteq R$ and we have
a one-to-one correspondence between the normal subgroups $R$ of $F$
such that $F/R \cong G$ and the normal subgroups $\overline{R}$ of
$F_c$ such that $F_c/\overline{R} \cong G$. The number of such normal subgroups  is
\[ \frac{|\mathrm{Epi}(F,G)|}{|\Aut(G)|} =\frac{ |\Phi(G)|^g}{|\Aut(G)|} \prod_{k=1}^g (p^g - p^{g-k}). \]

A $g$-tuple of elements generates $\Rbar = R/P_c(F)$ as a normal subgroup of $F_c$ if
and only if its image generates the  $\mathbb{F}_p$-space $V = R / P_c(F) R^*$. This has dimension $\r$ by definition.
If we let $\Rbar^* = P_c(F)R^*/P_c(F) \subseteq F_c$ then the number of $g$-tuples that generate $\Rbar$ is 
\[   |\Rbar^*|^g \prod_{k=1}^\r (p^g - p^{\r-k}). \]
A similar calculation to the one in Theorem~\ref{main-group-theory-ab} now shows that 
\begin{eqnarray*}
\frac{|S_c(G)|}{|\Phi(F_c)|^g} 
&=& \frac{1}{|\Phi(F_c)|^g}  \frac{ |\Phi(G)|^g}{|\Aut(G)|} \prod_{k=1}^g (p^g - p^{g-k}) |\Rbar^*|^g \prod_{k=1}^\r (p^g - p^{\r-k}) \\
&=& \frac{1}{|\Aut(G)|}
\,
p^{g^2}
\prod_{k=1}^g (1 - p^{-k}) \prod_{k={1 + g - \r}}^g (1 - p^{-k})
\end{eqnarray*}
where we've made use of the fact that $|\Rbar^*| = |\Rbar|/p^\r$ and $|\Phi(G)| = |\Phi(F_c)|/|\Rbar|$.

We now relate this quantity to $\Meas_c(G)$. Using parts (i), (ii) and (iii) of  Lemma~\ref{phi-map2}, we have 
\[|S_c(G)| = |S_c|  =  |T_c| \cdot |\A_c \cap \Rbar |^g. \] 
It follows that
\begin{eqnarray*}
\Meas_c(G) = \frac{|T_c|}{|X_c|^g} &=& \frac{|S_c|}{|X_c|^g |\A_c
  \cap \Rbar |^g} = \frac{1}{|\A_c \cap \Rbar |^g} \frac{|\Phi(F_c)|^g}{|X_c|^g } \frac{|S_c|}{|\Phi(F_c)|^g} \\ 
  &=& \frac{|\A_c|^g}{|\A_c \cap \Rbar |^g} \frac{|S_c|}{|\Phi(F_c)|^g} = \see(G)^g  \frac{|S_c|}{|\Phi(F_c)|^g}
\end{eqnarray*}
where we have also made use of Lemma~\ref{phi-map1} and part (iv) of
Lemma~\ref{phi-map2} in the last two steps. Substituting our earlier expression for $|S_c| /
|\Phi(F_c)|^g$, we arrive at the
formula for $\Meas_c(G)$ in the statement of the theorem.

This completes the verification of the formula for
$\Meas_c(G)$. The verification of the formula for
$\Meas_{c'}(G)$ where $c' > c$ is almost identical. The only
part that changes is the second step where one now counts the number of
$g$-tuples generating a normal subgroup $\Rbar = R/P_{c'}(F)$ with $F_{c'}/\Rbar \cong G$. 
Since $G$ has $p$-class $c$ we have $P_c(F) \subseteq R$ and
so $P_{c'}(F) \subseteq R^*$ for all $c' > c$. It follows that in this
case $V = R / P_{c'}(F) R^* = R/ R^*$. This is the $p$-multiplicator
whose dimension as an $\mathbb{F}_p$-space is equal to the relation
rank $r$. Thus the formula for the number of $g$-tuples can be
obtained by taking the formula in the first argument and replacing the
quantity $h$ with $r$.
\end{proof}

\begin{corollary}\label{meas-schur-sigma}
Let $G$ be a non-cyclic Schur $\sigma$-group of $p$-class $c$ and rank $g$ satisfying KIP.
Then
\begin{eqnarray*}
\Meas(G) = \Meas_c(G) =
\frac{\see(G)^g}{|\Aut(G)|}
\,
p^{g^2}
\prod_{k=1}^g (1 - p^{-k})^2.
\end{eqnarray*}
\end{corollary}

\begin{example}
Let's compute $\mathrm{Meas}_2(G)$ for the \SSA groups of $3$-class $2$ in
Example~\ref{ex-p3g2c2} using Theorem~\ref{main-group-theory}.  The fact that these three groups
satisfy KIP can be verified computationally or by using Theorem~\ref{kernel-invariance} since the
three groups are all immediate descendants of $F_1 = F/P_1(F)$.  We
have $p = 3$, $g = 2$ and $c = 2$ so the formula reduces to
$\mathrm{Meas}(G_i) = 48 k \see(G_i)^2/|\Aut(G_i)|$, where $k =
\frac{1}{3^{2\r}} \prod_{k=1}^\r (3^2 - 3^{\r-k})$ is the proportion
of ordered pairs of vectors that span an $\r$-dimensional vector space
over $\F_3$ and $\r=\r(G_i)$.  For $G_1, G_2, G_3$ we have
$\see(G_i)=3$ for all $i$, $\r(G_i) = 2,1,0$ respectively, and
$|\Aut(G_i)|=432, 972, 34992$ respectively. Thus, $\mathrm{Meas}(G_1)
= 48 \times 16/27 \times 3^2 \times 1/432 = 16/27$;
$\mathrm{Meas}(G_2) = 48 \times 8/9 \times 3^2 \times 1/972 = 32/81$;
$\mathrm{Meas}(G_3) = 48 \times 1 \times 3^2 \times 1/34992 =
1/81$. These values agree with those obtained by our earlier direct
computations.
\end{example}

\begin{definition}
Suppose $G$ is a finite $p$-group equipped with a GI-automorphism $\tau$.
We denote by $\Aut_\tau(G)$ the set of all automorphisms of $G$ which commute with $\tau$.
\end{definition}

\begin{theorem}\label{aut-group-orders}
Suppose $G$ is a finite $p$-group equipped with a
GI-automorphism $\tau$ and which satisfies KIP.  Then $|\Aut_\tau(G)| =
|\Aut(G)|/\see(G)^g$.
\end{theorem}

\begin{proof}
Let $\Sigma(G)$ be the set of all GI-automorphisms of $G$. The
automorphism group $\Aut(G)$ acts on $\Sigma(G)$ by conjugation and
this action is transitive by Hall's theorem and Schur-Zassenhaus.  The
stabilizer of $\tau \in \Sigma(G)$ is $\Aut_\tau(G)$ so we have
$|\Aut(G)| = |\Aut_\tau(G)| |\Sigma(G)|$. We will now show that
$|\Sigma(G)| = \see(G)^g$ which implies the statement of the theorem.

Consider the set $\mathcal{E}(F,G)$ of epimorphisms from $F$ to
$G$. We are going to count the number of elements in
$\mathcal{E}(F,G)$ in two different ways. Let $\phi \in
\mathcal{E}(F,G)$. The kernel of $\phi$ is invariant under $\sigma$
since $G$ satisfies KIP. It follows that $\sigma$ induces a
GI-automorphism on $G$, which we denote by $\alpha$, satisfying
\[ \alpha(\phi(x)) = \phi(\sigma(x)) \ \ (\ast) \]  
for all $x$ in $F$.  We thus have a map $\mathcal{E}(F,G) \rightarrow
\Sigma(G)$ defined by $\phi \mapsto \alpha$. This map is surjective
due to work of Koch and Venkov discussed in
Section~\ref{subsection-prelim}. To understand the fibers of this map,
we fix $\alpha$ and ask which $\phi$ satisfy $(\ast)$.  First note
that $\phi$ is determined by $(\phi(x_1),...,\phi(x_g)) \in G^g$ and
that any ordered $g$-tuple is possible so long as they generate $G$
and satisfy $(\ast)$.  The property $(\ast)$ says that
$\alpha(\phi(x_i)) = \phi(x_i^{-1}) = \phi(x_i)^{-1}$, in other words,
that $x_i \in X(G, \alpha)$ for all $i$.  Thus every $\phi$ yields an
element of $X(G,\alpha)^g$ generating $G$ and vice versa every element
of $X(G,\alpha)^g$ generating $G$ specifies a legitimate $\phi$. The
size of this set of tuples is independent of $\alpha$, so we see that
the fibers are uniform in size and hence $|\mathcal{E}(F,G)|$ is the
product of $|\Sigma(G)|$ and the number of elements of $X(G,\alpha)^g$
generating $G$.

On the other hand, if we fix $\alpha \in \Sigma(G)$ then it is easily
seen that $G = Y(G,\alpha) X(G,\alpha)$ and that $X(G,\alpha) \cap
Y(G,\alpha) = \{1\}$. Associate to $\phi \in \mathcal{E}(F,G)$, the
tuple $(\phi(x_1),...,\phi(x_g)) \in G^g$. Write this uniquely as
$(a_1b_1,...,a_gb_g)$ where $a_i \in Y(G,\alpha)$ and $b_i \in
X(G,\alpha)$.  Since $\phi$ is surjective if and only if $b_1,...,b_g$
generate $G$ (as $Y(G,\alpha) \subseteq \Phi(G)$), we see that
$|\mathcal{E}(F,G)|$ is $|Y(G,\alpha)|^g$ times the number of elements
of $X(G,\alpha)^g$ generating $G$.

Equating the two expressions for $|\mathcal{E}(F,G)|$, we deduce that
$|\Sigma(G)| = |Y(G,\alpha)|^g = \see(G)^g$ as desired.
\end{proof}

Combining Theorem~\ref{main-group-theory} and
Theorem~\ref{aut-group-orders}, and using the function $\eta_j(p)$ in
Remark~\ref{remark-eta}, we obtain the following Corollary, which is
the basis for Conjecture \ref{our-conjecture} stated in the Introduction.
\begin{corollary}
Let $G$ be a \SSA group of $p$-class $c$ and rank
$g$ satisfying KIP. Let $\r=\r(G)$ and $r = r(G)$.  Then
\begin{equation*}
\Meas_c(G) = 
\frac{1}{|\Aut_\sigma(G)|}
\,
p^{g^2} \left(\frac{ \eta_g(p)^2}{\eta_{g-\r}(p) }\right) \\
\end{equation*}
and for $c' > c$
\begin{equation*}
\Meas_{c'}(G) = 
\frac{1}{|\Aut_\sigma(G)|}
\,
p^{g^2} \left(\frac{ \eta_g(p)^2}{\eta_{g-r}(p) }\right).
\end{equation*}
\end{corollary}

\begin{remark}
We have largely set aside the case of cyclic $p$-groups in this section
because, being abelian, they are already covered by the original
Cohen-Lenstra heuristics.  Hence, one can compute $\Meas^{ab}(G)$ as a predictor for the value of the
frequency $\Freq(G)$.  However, one can also view a cyclic $p$-group $G$ as a Schur
$\sigma$-group. It easy to see that such a group satisfies KIP and we can therefore compute $\Meas(G)$ via our formula; when we do so, we obtain the same value for $\Freq(G)$ since the GI-automorphism $\sigma$ is just inversion and so $\Aut_\sigma(G)=\Aut(G)$.
\end{remark}

For use later and to illustrate the ideas so far, we now display a
tree showing the first few levels of \SSA groups $G$ which are
descendants of $G_1$. Each vertex corresponds to a group $G$ and is
labeled with the quantity $\mathrm{Meas}_c(G)$ where $c$ is the
$p$-class of $G$. Certain relationships exist between the labels as
explained in Theorem~\ref{thm-meas-relations}. Vertices with no
descendants (circled in the figure) correspond to Schur
$\sigma$-groups and in this case the label is also the value of
$\Meas(G)$. For vertices that are not terminal, the label is always
equal to the sum of the labels of the immediate descendants.

These labels were calculated using the formulas from this section and assuming KIP. 
We confirmed KIP computationally for each group in the figure with $p$-class at most $6$. 
Unfortunately it would appear to be prohibitively time-consuming to test KIP for all of the groups at the next level.

In constructing the tree, a new phenomenon appeared. Namely, in addition to $G_1$, $G_2$ and $G_3$, there are two other $2$-generated $3$-groups of $3$-class~$2$ that have a GI-automorphism, $\Z/3 \times
\Z/9$ and $\Z/9 \times \Z/9$.  These groups arise as quotients of
Schur $\sigma$-groups (indeed all the groups above are quotients of
$G_3$) but not as $G/P_2(G)$ for any Schur $\sigma$-group $G$ and so 
are not \SSA groups.

These groups even have the difference between their $p$-multiplicator
rank and nuclear rank equal to $2$ and so are hard to distinguish from
\SSA groups.  We refer to such groups as
pseudo-\SSA groups. These arise elsewhere, although
rarely since in the situations we are considering, all the children of
a group typically have the same order and so one cannot be a proper
quotient of another. As an example, $J_{22}$, introduced below in
Section 4, has two \SSA groups and two pseudo-Schur
$\sigma$-quotient groups as children, which are quotients of one of
the \SSA groups by subgroups of order $3$ fixed by
the GI-automorphism.

\newpage

\includegraphics[scale=0.62,angle=90]{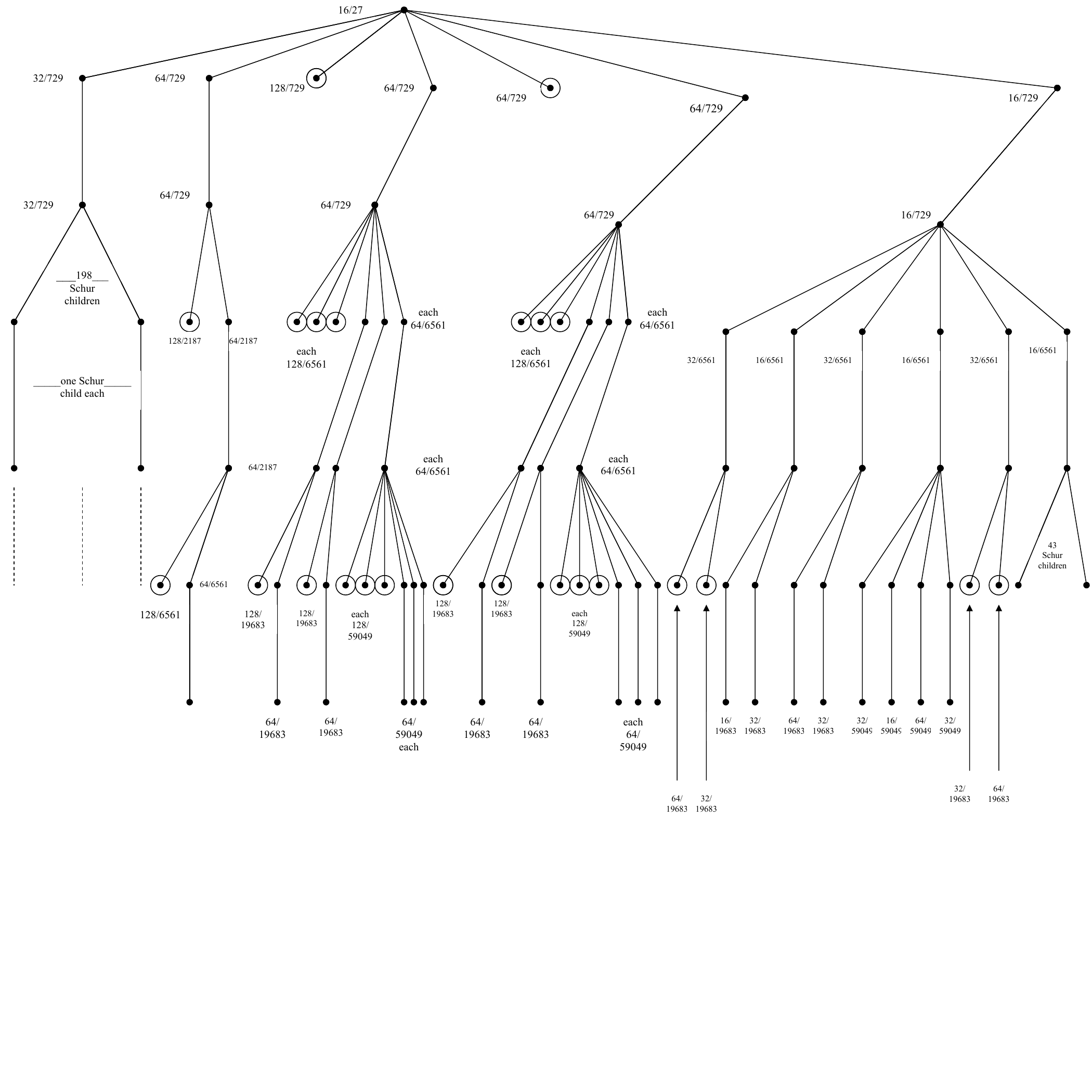}

\subsection{Groups satisfying KIP}\label{subsection-kip}

The KIP condition would seem to be quite
restrictive, yet it applies in all the cases where we have needed to
compute $\Meas_c(G)$ and we have yet to find a
\SSA group where it does not apply. Finite abelian $p$-groups certainly
satisfy KIP; we also have the following result.

\begin{theorem}\label{kernel-invariance}
For all $c \geq 1$, if $G = F_c$ or $G$ is an immediate descendant of $F_c$ possessing a GI-automorphism, then $G$ satisfies KIP.
\end{theorem}
\begin{proof}
If $G = F_c$ then every epimorphism $\alpha: F \rightarrow G$ factors
through the natural epimorphism $F \rightarrow F_c$ since $G$ has
$p$-class $c$.  This gives rise to an epimorphism from $F_c$ to $G$
which must be an isomorphism since $G = F_c$ is finite. It follows
that $\ker \alpha = P_c(F)$ which is a characteristic subgroup of $F$
and hence invariant under $\sigma$.

If $G$ is an immediate descendant of $F_c$ then every epimorphism
$\alpha: F \rightarrow G$ factors through the natural epimorphism $F
\rightarrow F_{c+1}$ and $P_{c+1}(F) \subseteq \ker \alpha \subseteq
P_c(F)$. Hence, all the kernels of epimorphisms from $F$ to $G$ will
be invariant under $\sigma$ if and only if all the kernels of the
epimorphisms from $F_{c+1}$ to $G$ are invariant.  As noted earlier,
if $G$ possesses a $\sigma$-automorphism then there must exist at
least one epimorphism with kernel that is invariant under $\sigma$. We
now show that this implies all such kernels are invariant.

Observe that if $R_1$ and $R_2$ are two such kernels then the
isomorphism $F/R_1 \cong F/R_2$ lifts to an automorphism of $F$ which
maps $R_1$ to $R_2$. It follows that $\Aut(F)$ acts transitively on
the set of kernels. If we now consider the images of the kernels in
$P_c(F)/P_{c+1}(F) \subseteq F_{c+1}$, then the same statement holds
where $\Aut(F)$ acts via the restriction homomorphism $\rho_c: \Aut(F)
\rightarrow \Aut(P_c(F)/P_{c+1}(F))$. The map $\rho_c$ factors through
$\rho_0: \Aut(F) \rightarrow \Aut(F/P_1(F))$. This can be seen by
using an inductive argument to verify that $\ker \rho_0 \subseteq \ker
\rho_i$ for $i \geq 0$. The induction is straightforward and uses the
recursive definition of the central series $\{ P_i(F) \}_{i \geq
  0}$. More details can be found in the proof of a slightly more
general statement appearing in \cite[Chapter VIII, Theorem 1.7]{HB}.

Since $\rho_0(\sigma)$ is the inversion automorphism in
$\Aut(F/P_1(F))$, it is clearly central in the image of $\Aut(F)$
under $\rho_0$. It follows that the image of $\sigma$ is central in
the image of $\Aut(F)$ under $\rho_i$ for all $i$. Combining this
statement for $i = c$ with the transitivity of the action of $\Aut(F)$
on the images of the kernels in $P_c(F)/P_{c+1}(F)$, we see that if
one image is invariant under $\sigma$ then they all must be
invariant. Pulling this back to $F$, we see that if one kernel is
invariant under $\sigma$ then they all must be invariant.
\end{proof}

There are examples of finite $p$-groups with GI-automorphisms that do not satisfy KIP.
The five groups SmallGroup(243,$i$) for $i = 51,...,55$ are the smallest ones. Indeed it appears that for any odd prime $p$ there are exactly five groups of order $p^5$ failing to satisfy KIP, all of which have $g=3$ generators
but $h=6$ and so are not \SSA groups. Among the groups of order $729$, there are exactly $58$ 
such examples, of which $53$ are $3$-generated (and have $h=5,6,$ or $8$) and five are 
$4$-generated (and have $h=10$). Therefore none of the examples of order $729$ are \SSA groups.

\section{Conjectures}

In this section, we formulate our main heuristic assumption, then use
the  group-theoretical results from the previous section to make precise
conjectures about the distribution of $p$-class tower groups  of
imaginary quadratic fields as well as the distribution of their maximal $p$-class $c$ quotients.
Recall that $A_K$ denotes the $p$-Sylow subgroup of the class group of $K$.

The arithmetic input, as already noted by Koch and Venkov, is
three-fold.  First, we observe that for an imaginary
quadratic field $K$, complex conjugation has a natural action on arithmetic objects
attached to $K$.  In particular, since $\Q$ has trivial class group,
$\a \overline{\a}$ is principal for every fractional ideal $\a$ of
$K$, so complex conjugation acts by inversion on $A_K$.  More
generally, complex conjugation acts as an involution on $\G_K$, and as
inversion on $\G_K^{\ab} \cong A_K$ thanks to the functorial
properties of the Artin reciprocity map.  The last two ingredients are
the finiteness of the class group, and the vanishing of the $p$-rank
of the unit group of $\mathcal{O}_K$.  The former ensures that $\G_K$
has finite abelianization (as does every one of its open subgroups),
and the latter that $r(\G_K)=d(\G_K)$, by a theorem of Shafarevich
\cite{Sh}.  Thus, $\G_K$ is always a Schur $\sigma$-group.

For $x>0$, let $\FF_x$ denote the set of imaginary quadratic fields
with absolute value of discriminant not exceeding $x$, and for each
natural number $g$, let $\FF_{x,g}$ be the subset of $\FF_x$
consisting of those fields $K$ having $d(A_K)=g$.  
For pro-$p$ groups $G$ and $H$, define  $\ch_G(H)$ to be $1$ if $G \cong H$ and $0$ otherwise.
\begin{definition}
Let $G$ be a  finitely generated pro-$p$ group with generator rank $g$.
We define 
$$ \Freq(G) = \lim_{x \to \infty} \frac{\sum_{K \in \FF_{x,g}}
\ch_G(\G_K)}{\sum_{K\in \FF_{x,g}} 1},
$$ 
assuming the limit exists.
If $G$ is also finite then, for $c \geq 1$, we define
$$ \Freq_c(G) = \lim_{x \to \infty} \frac{\sum_{K \in \FF_{x,g}}
\ch_G(\G_K/P_c(\G_K))}{\sum_{K\in \FF_{x,g}} 1},
$$ assuming the limit exists. 
\end{definition}

Our main heuristic assumption is that the frequencies defined above exist and are given by the group-theoretical measures 
introduced in Section~2 when $G$ is a finite $p$-group. More specifically, we make the following conjecture.

\begin{conjecture}\label{MainConjecture}
For every finite $p$-group $G$, we have 
\begin{eqnarray*}
 \Freq(G) &=&\Meas(G) \\
 \Freq_c(G) &=& \Meas_c(G).
\end{eqnarray*}
In particular, $\Freq(G) \neq 0$ if and only if $G$ is a Schur $\sigma$-group and $\Freq_c(G) \neq 0$ if and only if
$G$ is a \SSA group with $p$-class $c$ or $G$ is a Schur $\sigma$-group with $p$-class at most $c$.
When $G$ satisfies KIP, the measures can be computed using the formulas provided in Section~\ref{subsection-formulas}. 
\end{conjecture}

As a consequence of Conjecture \ref{MainConjecture}, we
expect every {\em finite} Schur $\sigma$-group (respectively \SSA group of $p$-class $c$) to occur as $\G_K$
(respectively $\G_K/P_c(\G_K)$) for a positive proportion of
imaginary quadratic fields $K$. 

We do not have a conjecture about the value of $\Freq(G)$ when $G$ is
an infinite pro-$p$ group. It is worth noting that there are
infinite Schur $\sigma$-groups that we do not expect to arise as
$\G_K$ for any $K$. For example, the Sylow $3$-subgroup of
$SL_2(\Z_3)$ considered in \cite{BB} is a $2$-generator $2$-relator
pro-$3$ group with finite abelianization and a
GI-automorphism, but the tame case of the Fontaine-Mazur conjecture~\cite[Conjecture 5a]{FM}
implies that it does not arise as $\G_K$ for any $K$. It is, however,
arbitrarily closely approximated by the finite Schur $\sigma$-groups
in \cite{BB}.

\section{Index-p-Abelianization-Data (IPAD)}  
\label{section-IPAD}

As discussed in the Introduction, a complete calculation of $\G_K$ is
prohibitive for most fields $K$. We thus seek to put certain partial but accessible information
about $p$-class tower groups under a general group-theoretical
framework.  In order to make comparisons with data coming from
number theory, it will be useful to consider abelianizations of low
index subgroups. To that end we introduce the notion of IPAD.  Thanks
to the $p$-group generation algorithm, and the theory developed in
Section 2, we are able to prove precise measures for the most frequent IPADs when $p = 3$ and $g = 2$. 
We will compare these values with  the observed number-theoretical frequencies in Section~5 (see
Table 2).

\begin{definition}
The abelian group $\Z/q_1 \times \dots \times \Z/q_d$ will be denoted
$[q_1,\dots,q_d]$.  Given a $g$-generated pro-$p$ group $G$, its
Index-$p$ Abelianization Data (or IPAD for short) will be the
unordered $ (p^g-1)/(p-1)$-tuple of abelianizations of the index $p$
subgroups of $G$ augmented by the abelianization of $G$ itself; we
always list the latter group first.
It
will be called $\ipad(G)$.
\end{definition}

For example, the IPAD of the Schur $\sigma$-group
{\tt SmallGroup}(243,5) will be denoted $$[ [3,3] ; [3,3,3] [3,9]^3 ],$$
indicating that its abelianization is $[3,3]$ and those of its $4$
index $3$ subgroups are $[3,3,3],$ $[3,9],$ $[3,9],$ and $ [3,9]$.  

Some other terminology that we will use in this section: for brevity, a 
descendant of a \SSA group $G$ is called a \emph{Schur descendant} of $G$ if it is also a \SSA group.
If it is an immediate descendant then we will call it a \emph{Schur child}. We will sometimes simply say that a group is
\emph{Schur} to indicate that it is a \SSA.

There are two things to note in working with IPADs \cite{BN}. First,
considering $g$-generated pro-$p$ groups for a fixed $p$ and $g$, if
$H$ is a quotient of $G$, then each entry of $\IPAD(H)$ is a quotient
of a corresponding entry of $\IPAD(G)$. This gives a partial order on
IPADs and we say that $\IPAD(H) \leq \IPAD(G)$. Second, if
$\IPAD(G/P_n(G)) = \IPAD(G/P_{n-1}(G))$ (we call the IPAD settled), then
$\IPAD(G) = \IPAD(G/P_n(G))$.

It follows that for a given IPAD there is a measurable subset of $X^g$
producing groups with that IPAD.  We now compute, in the case $g=2$
and $p=3$, the measures of the most common IPADs.

\begin{theorem}\label{group_theory_predictions}

(1) $\mathrm{IPAD} \ [ [3,3] ; [3,3,3]  [3,9]^3  ]$ has measure $128/729 \approx 0.1756$;

(2) $\mathrm{IPAD} \ [ [3,9] ; [3,3,9]^2 [3,27]^2 ]$ has measure $256/2187 \approx 0.1171$;
  
(3) $\mathrm{IPAD} \ [ [3,3] ; [3,3,3]^3, [3,9] ]$ has measure $64/729 \approx 0.0878$;

(4) $\mathrm{IPAD} \ [ [3,3] ; [3,3,3]^2 [3,9]^2 ]$ has measure $64/729 \approx 0.0878$;

(5) $\mathrm{IPAD} \ [ [3,3] ; [3,9]^3 [9,27] ]$ has measure $512/6561 \approx0.0780$;

(6) $\mathrm{IPAD} \ [ [3,3] ; [3,3,3] [3,9]^2 [9,27] ]$ has measure $512/6561 \approx0.0780$;

(7) $\mathrm{IPAD} \  [ [3,27] ; [3,3,27]^2 [3,81]^2 ]$ has measure $256/6561 \approx 0.0390$;

(8) $\mathrm{IPAD} \ [ [3,3] ; [3,3,3]^2 [9,27]^2  ]$ has measure $2048/59049 \approx 0.0347$;

(9) $\mathrm{IPAD} \  [ [3,9] ; [3,3,9] [3,9,27] [3,27]^2  ]$ has measure $640/19683 \approx 0.0325$;

(10) $\mathrm{IPAD} \  [ [3,3] ; [3,9]^4 ]$ has measure $16/729 \approx 0.0219$;

(11) $\mathrm{IPAD} \  [ [3,9] ; [3,3,9] [3,27]^3 ]$ has measure $128/6561 \approx 0.0195$;

(12) $\mathrm{IPAD} \  [ [3,9] ; [3,3,9] [3,27]^2 [9,9,9] ]$ has measure $128/6561 \approx 0.0195$;

(13)  $\mathrm{IPAD} \  [ [3,9] ; [3,3,3,3] [3,27]^3 ]$ has measure $128/6561 \approx 0.0195$;

(14) $\mathrm{IPAD} \  [ [3,9] ; [3,9,27] [3,27]^3 ]$ has measure $1024/59049 \approx 0.0173$.

\end{theorem}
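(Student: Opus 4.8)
The plan is to write each of the fourteen measures as a finite sum of the quantities $\mathrm{Meas}(G)$ furnished by Theorem~\ref{main-group-theory}, summed over an explicitly determined set of Schur $3$-groups of rank $2$ that I would locate with O'Brien's $p$-group generation algorithm~\cite{O}. The organizing principle is the descendant tree of rank-$2$ Schur $3$-groups, whose $p$-class $2$ members are $G_1$, $G_2$, $G_3$. Recall from~\cite{BN}, as noted above, that along this tree the IPAD is non-decreasing for the partial order $\le$, and that once $\mathrm{IPAD}(G/P_n(G)) = \mathrm{IPAD}(G/P_{n-1}(G))$ this common value is inherited by every descendant of $G$. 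Since each of the fourteen targets $I$ is a \emph{finite} group, for any $v \in X^2$ with $\mathrm{IPAD}(F/\langle v\rangle) = I$ the sequence of IPADs of the $p$-class $n$ quotients of $F/\langle v\rangle$ is non-decreasing and bounded above by $I$, hence equals $I$ from some minimal $p$-class $c$ onwards; the corresponding maximal $p$-class $c$ quotient $G$ is then a Schur group at which the IPAD is settled and equal to $I$ but at whose parent the IPAD differs, and every descendant of $G$ --- finite or infinite --- again has IPAD $I$.

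Because the natural projection $X^2 \to X_c^2$ is surjective with fibres of constant size, hence measure-preserving, the set of $v \in X^2$ whose maximal $p$-class $c$ quotient is isomorphic to a fixed Schur group $G$ of $p$-class $c$ has relative measure exactly $\mathrm{Meas}(G)$. Combining this with the previous paragraph, the measure of the set of $v$ producing groups with IPAD $I$ equals $\sum_G \mathrm{Meas}(G)$, where $G$ runs over the finitely many Schur groups for which $\mathrm{IPAD}(G)$ is settled, equals $I$, and differs from the IPAD of the parent of $G$. The computation then reduces to: (i) running the $p$-group generation algorithm in \magma~\cite{magma} to build the tree of rank-$2$ Schur $3$-groups, recording at each node its IPAD and whether it is settled; (ii) for each node $G$ at which one of the fourteen IPADs first becomes settled, computing $|\Aut(G)|$, the relation rank $\r(G)$ (the excess of the $p$-multiplicator rank over the nuclear rank), and $\see(G)$ (the order of $\{x \in G : \sigma(x) = x\}$ for an inversion automorphism $\sigma$); (iii) evaluating the formula of Theorem~\ref{main-group-theory}, which here reads $\mathrm{Meas}(G) = 48\,k\,\see(G)^2/|\Aut(G)|$ with $k = 16/27,\,8/9,\,1$ according as $\r(G) = 2,\,1,\,0$; and (iv) summing over the nodes attached to each target IPAD. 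Every term being rational with denominator a power of $3$, so is each of the fourteen totals, and one reads off the stated values.

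The main obstacle is establishing \emph{completeness}: the tree of rank-$2$ Schur $3$-groups has branches of unbounded depth --- it is approached by infinite Schur $\sigma$-groups of rank $2$, such as the one studied in~\cite{BB} --- so a priori a branch could remain unsettled to arbitrary depth while keeping an IPAD that is $\le I$ for one of the fourteen targets, leaving the corresponding sum incomplete. What makes the argument work, and what I would verify directly with the $p$-group generation algorithm, is that every branch resolves at bounded $p$-class: pushing the tree far enough, each unsettled node either becomes settled or acquires an IPAD incomparable to, or strictly above, all fourteen targets, so that none of its descendants can contribute. A useful safeguard here is that the measures are conserved as one descends: for a non-terminal node the $\mathrm{Meas}$ values of its Schur children sum to its own, while terminal nodes retain their measure --- so the total mass carried by the still-unresolved nodes, which bounds whatever remains unassigned, can be monitored and seen to vanish on the relevant branches (the identity $\mathrm{Meas}(G_1)+\mathrm{Meas}(G_2)+\mathrm{Meas}(G_3)=1$ is the first instance of this bookkeeping). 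The remaining ingredients --- the automorphism group orders, the nuclear and $p$-multiplicator ranks, and the counts $\see(G)$ --- are routine \magma\ computations for the small groups involved.
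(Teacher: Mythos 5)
This is essentially the paper's own argument: organize the rank-$2$ Schur $3$-groups into the descendant tree rooted at $G_1,G_2,G_3$, use O'Brien's algorithm together with the monotonicity and settling properties of IPADs to locate the finitely many nodes where each target IPAD stabilizes, evaluate $\mathrm{Meas}$ there via Theorem~\ref{main-group-theory} (including the $48k\see(G)^2/|\Aut(G)|$ specialization), and certify completeness by checking that every other branch either terminates or acquires an incomparable or strictly larger IPAD. The one point to tighten is that the contributing nodes are the \emph{terminal} groups with the given IPAD together with the settled ones: a terminal (i.e.\ finite Schur $\sigma$-) group such as \texttt{SmallGroup(243,5)} contributes its full measure to case (1) even though its IPAD is \emph{not} settled in the technical sense of agreeing with its parent's, so the sum you describe, taken literally over settled nodes only, would omit these terms.
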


\begin{proof} 
First, note that we checked that all the groups below whose measures were needed
in this computation, satisfy KIP. 

Next, note that the abelianizations of $G_1, G_2, G_3$ are $[3,3],
[3,9], [9,9]$ respectively.  It follows that any IPAD with first entry
$[3,3]$ has to come from descendants of $G_1$, and moreover that the first
entry is settled, and so every descendant of $G_1$ has abelianization
$[3,3]$.

Thus, for all the cases above starting with $[3,3]$, we focus on
descendants of $G_1$.  By O'Brien we compute that $G_1$ has $11$
children. Of these, $7$ have difference between $p$-multiplicator rank
and nuclear rank at most $2$ (in fact exactly $2$) and all of these
turn out to have a GI-automorphism. Call them $H_1, \dots, H_7$ in
the order produced by O'Brien's algorithm as implemented in \magma
  (Ver. 2.16).

Of these, $H_3$ and $H_5$ are terminal and so are Schur
$\sigma$-groups. In the standard database they are SmallGroup(243,5)
and SmallGroup(243,7) respectively. Their IPADs are those on lines (1)
and (4) above. We compute that $\Meas(H_3) = 128/729$ and $\Meas(H_5)
= 64/729$. (1) and (4) follow by establishing that none of the Schur
descendants of the other $H_i$ have these IPADs. This also shows that
these groups are determined by their IPADs.  Note that the latter fact for
SmallGroup(243,5) is already observed in \cite{BB}[Prop.~3.1 and Cor.~3.3].

Of the other IPADs, only $\mathrm{IPAD}(H_4) \leq \mathrm{IPAD}(H_3)$
(in fact equal). The Schur child of $H_4$ has IPAD including $[9,9]$
and so does not contribute to (1). As for (4), we need to consider
$H_1$, which has the same IPAD as $H_5$. Only one child of $H_1$,
however, is Schur and its IPAD includes a $[9,9]$ and so cannot
contribute to (4). Thus, (1) and (4) are complete.

The Schur child of $H_1$ has $1602$ children, of which $198$ are
Schur. All of these have IPAD $[ [3,3] ; [3,3,3]^2 [9,27]^2 ]$ 
and nuclear rank between $2$ and $4$. All the Schur children of
$155$ of these have the same IPAD, so are settled and they contribute
$2048/59049$ to line (8) above. The Schur children of the other $43$
include $[27,27]$, so do not count towards (8).  The IPADs of the
remaining $H_i$ are not less than or equal to this IPAD and so (8) is
also complete.

The IPAD of $H_2$ is that on line (3) and all its children have the
same IPAD. It therefore contributes Meas$(H_2) = 64/729$ to (3). None
of the other $H_i$ has small enough IPAD that their descendants could
have IPAD as in (3), and so (3) is proven.

The IPADs of $H_6$ and $H_7$ are both that given in line (10). All the
children of $H_6$ have IPADs involving $[9,9]$, whereas the IPADs of
all the children of $H_7$ are settled as (10). It follows that this
IPAD has measure Meas$(H_7) = 16/729$, proving (10).

As for cases (5) and (6), these come from further investigation of
descendants of $H_6$ and $H_4$ respectively. In each case, the group
has a unique Schur child, which then has $6$ Schur children.  These
all have the respective IPADs. In each case, $3$ of the $6$ are
terminal, and the other $3$ each have one Schur child. Two of those
are settled, whereas the remaining group has larger IPAD.  Thus $5$ of
the $6$ Schur grandchildren of each $H_i$, whose measures are each
$64/729$, contribute to (5) and (6) respectively and the remaining
grandchild, whose measure is $64/6561$, does not. Thus the IPADs in
(5) and (6) each have measure $64/729 - 64/6561 = 512/6561$, and (5)
and (6) are proven.

IPADs (2), (7), (9), (11), (12), (13), and (14) above must come from
descendants of $G_2$. This has $22$ Schur children.  We call these
$J_1, \dots, J_{22}$ in accordance with O'Brien's ordering. Only
$J_{10}, J_{11}$, and $J_{12}$ have IPADs less than or equal to (in
fact equal to) that of (2). The last two are terminal and the unique
Schur child of $J_{10}$ has larger IPAD. Thus, the IPAD of (2) has
measure $\Meas(J_{11}) +$ $\Meas(J_{12}) = 256/2187$, and (2) is proven.

The unique Schur child above has IPAD $[ [3,9] ; [3,3,9] [3,9,9]
  [3,27]^2 ]$. A Schur descendant of $G_2$ with IPAD in line (9) has
to descend from this child (by comparing the IPADs of the other
$J_i$). It has $9$ Schur children, of which $6$ have the IPAD of
(9). The others have IPAD $[ [3,9] ; [3,3,9] [9,9,9] [3,27]^2]$, which
is incomparable. Two of these are terminal, the other settled, and so
this proves line (12).  Of the remaining $6$, there are $4$ terminal
groups, $1$ settled, and $1$ with a unique Schur child with larger
IPAD. Summing the measures of the first $5$ groups yields $640/19683$
and establishes (9).

Case (7) can only arise from descendants of $J_5$. It has $3$ Schur
children, with the $2$ terminal ones having the desired IPAD and the
other having larger IPAD. This establishes (7).

Case (11) arises from descendants of $J_{14}$ and $J_{17}$, all of
which are settled, and so its measure is the sum of their
measures. Case (13) similarly arises from $J_{13}$ and $J_{16}$, which
are settled.

As for (14), this has to come from descendants of $J_{15}$ and
$J_{18}$. Each has measure $64/6561$ and their trees of descendants
are identical. Each has a unique Schur child and $4$ Schur
grandchildren. Of these, $1$ is terminal and $2$ others settled with
the desired IPAD.  The children of the remaining group have larger
IPAD, so we subtract its measure, $64/59049$.  Since $2(64/6561 -
64/59049) = 1024/59049$, (14) is proven.

Note that none of the $14$ given IPADs have first entry greater than or 
equal to $[9,9]$ and so no descendants of $G_3$ will have one of these IPADs.
Since the measure of $G_3$ is $1/81 = 0.0123$, the IPADs produced by
its descendants will all have measure smaller than that of any of the $14$ given IPADs.
\end{proof}

The descendants of $H_1, H_4,$ and $H_6$ appear to follow periodic
patterns that lead to the following conjecture, which would complete
the computation of measures of IPADs beginning $[3,3]$, since summing
all their conjectured values gives $16/27 = \mathrm{Meas}(G_1)$.

\begin{conjecture}
(a) If $k \geq 2$, then $\mathrm{IPAD} \ [ [3,3] ; [3,9]^3
    [3^k,3^{k+1}] ]$ has measure $512/3^{2k+4}$;

(b) If $k \geq 2$, then $\mathrm{IPAD} \ [ [3,3] ; [3,3,3] [3,9]^2
    [3^k,3^{k+1}] ]$ has measure $512/3^{2k+4}$;

(c) If $k \geq 2$, then $\mathrm{IPAD} \ [ [3,3] ; [3,3,3]^2
    [3^k,3^{k+1}]^2 ]$ has measure $2048/3^{4k+2}$;

(d) If $k \geq 2$, then $\mathrm{IPAD} \ [ [3,3] ; [3,3,3]^2
    [3^k,3^{k+1}] [3^{k+1},3^{k+2}] ]$ has measure $512/3^{4k+2}$.

\end{conjecture}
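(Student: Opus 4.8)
The plan is to upgrade the computational observation behind the conjecture --- that the trees of Schur descendants of the three order-$81$ groups $H_1$, $H_4$, $H_6$ become periodic past some finite level --- into a rigorous statement, and then to evaluate $\Meas$ via Theorem~\ref{main-group-theory}. First I would localise the problem: every group whose IPAD has one of the forms (a)--(d) is a descendant of $G_1$, and the case analysis in the proof of Theorem~\ref{group_theory_predictions} already shows that the only children of $G_1$ whose descendants can acquire such an IPAD are $H_1$ (families (c) and (d)), $H_4$ (family (b)) and $H_6$ (family (a)); the remaining children of $G_1$ have settled IPADs incomparable with (a)--(d). So it is enough to describe, at every level, the Schur descendants of each of $H_1$, $H_4$, $H_6$, together with their automorphism-group orders, their values $z(G)$, and their nuclear ranks --- equivalently the Schur relation ranks $\r(G)$ --- since these are exactly the data that feed $\Meas(G)$ through Theorem~\ref{main-group-theory}.

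The heart of the argument is to prove that each of these three subtrees really is eventually periodic. The natural mechanism is that the relevant groups are the lower-$p$-central quotients $G_\infty/P_c(G_\infty)$ of an infinite pro-$3$ Schur $\sigma$-group $G_\infty$ --- the mainline of the subtree --- one index-$3$ subgroup of which has abelianization $\Z/3\times\Z_3$; this is precisely what forces the growing entry $[3^k,3^{k+1}]$ in the IPAD. If $G_\infty$ has finite coclass one can invoke the coclass-periodicity theorems in the literature --- due to du Sautoy and to Eick and Leedham-Green --- for its tree; otherwise one falls back on the periodicity of pruned $p$-group-generation trees. Because the groups involved are small, the cleanest route is probably the explicit one: exhibit, by induction on $k$, a uniform family of power-commutator presentations such that O'Brien's $p$-covering construction carries the level-$k$ presentation to the level-$(k+1)$ one, branching it in a fixed pattern of terminal / settled / ``unique Schur child of strictly larger IPAD'' pieces (the last to be subtracted off exactly as in the treatment of lines (5), (6), (14) of Theorem~\ref{group_theory_predictions}), and to read off $|\Aut|$, $z$ and the nuclear rank as functions of $k$.

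Granting this, the total measure of family (a) at parameter $k$ is a finite sum of terms of the shape prescribed by Theorem~\ref{main-group-theory}, and periodicity --- in particular the growth of $|\Aut(G)|$ by a fixed power of $3$ per period, with $z(G)$ and $\r(G)$ eventually constant along branches --- forces that sum to be multiplied by $1/3^2$ when $k\mapsto k+1$ (by $1/3^4$ for families (c) and (d)). Matching the base cases --- which for (a), (b), (c) are lines (5), (6), (8) of Theorem~\ref{group_theory_predictions} and for (d) is the direct computation at $k=2$ --- pins the constants and gives $512/3^{2k+4}$, $512/3^{2k+4}$, $2048/3^{4k+2}$ and $512/3^{4k+2}$ respectively. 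As a global check, summing the four families over $k\ge 2$ and adding the measures of the finitely many sporadic IPADs among the descendants of $G_1$ must reproduce $\Meas(G_1)=16/27$.

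The main obstacle is turning ``the tree looks periodic in \magma'' into a theorem. The abstract periodicity of such trees is by now fairly standard; the awkward point is that the quantities entering $\Meas$ are the $\sigma$-refined invariants $z(G)$ and $\r(G)$, not the bare isomorphism type, so one must verify that the involution $\sigma$ and the category of Schur $\sigma$-groups behave compatibly with whatever recursion generates the tree --- and that the ``Schur'' condition singles out, at each level, exactly the branches the periodicity predicts. Keeping that bookkeeping correct at every level, especially the recurring ``subtract the unique child of larger IPAD'' correction, is where essentially all the work lies; everything after that is a geometric series.
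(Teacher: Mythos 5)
You should first note that the paper does not prove this statement at all: it is stated as a \emph{conjecture}, motivated only by the observation that the descendant trees of $H_1$, $H_4$ and $H_6$ ``appear to follow periodic patterns'' in the \magma\ computations, together with the consistency check that the conjectured values sum with the proven IPAD measures to $\Meas(G_1)=16/27$. Your proposal correctly identifies all of the relevant structure --- the localisation to descendants of $G_1$, the attribution of families (a)--(d) to the subtrees below $H_6$, $H_4$ and $H_1$, the role of $|\Aut(G)|$, $z(G)$ and $\r(G)$ via Theorem~\ref{main-group-theory}, the recurring ``subtract the unique Schur child of larger IPAD'' correction, and the global check against $\Meas(G_1)$ --- but it does not actually prove anything beyond what the authors already assert. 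The entire content of the conjecture is the eventual periodicity of these Schur $\sigma$-descendant trees together with the behaviour of the $\sigma$-refined invariants along them, and your proposal defers exactly that step (``exhibit, by induction on $k$, a uniform family of power-commutator presentations such that O'Brien's $p$-covering construction carries the level-$k$ presentation to the level-$(k+1)$ one'') to a computation that is never performed. As written, this is a research plan, not a proof.

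Two more specific concerns. First, the appeal to coclass periodicity (du Sautoy, Eick--Leedham-Green) is not obviously available here: those theorems govern coclass trees, whereas the trees in question branch heavily (the paper notes that the Schur child of $H_1$ already has $1602$ children, $198$ of them Schur), and nothing in the paper or your argument shows the relevant groups lie in finitely many coclass trees; the fallback to ``periodicity of pruned $p$-group-generation trees'' would itself need a precise statement and proof compatible with the Schur and $\sigma$ conditions, which is exactly the hard point you flag but do not resolve. Second, a small but telling slip: for family (a) the growing IPAD entry is $[3^k,3^{k+1}]$ with \emph{both} exponents unbounded, so the putative mainline limit $G_\infty$ would have an index-$3$ subgroup with abelianization $\Z_3\times\Z_3$, not $\Z/3\times\Z_3$ as you write; this matters because an index-$3$ subgroup with infinite abelianization of rank $2$ is precisely the non-FAb phenomenon discussed in Remark~4 of the paper, and it constrains which periodicity mechanisms could apply. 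In short, the statement remains a conjecture, and your proposal, while a sensible roadmap consistent with the authors' evidence, contains a genuine gap at its central step.
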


\begin{remark}
1. As noted, the measure of an IPAD is the sum of the measures of
terminal and settled groups.  If it only involves terminal groups,
then it determines a finite list of groups having that IPAD.
Sometimes, such as for lines (1) and (4) above, it determines a unique
group.  Now consider the IPAD in line (7), which corresponds to the
two terminal Schur children of $J_5$. An imaginary quadratic number
field with that IPAD (such as $\Q(\sqrt{-17399})$) therefore has one
of these two groups as the Galois group of its $3$-class tower, the
first cases of a non-abelian $3$-class tower of a quadratic field having $3$-class
length $4$. This group has derived length $2$. We have not found an
IPAD consisting only of terminal groups of finite derived length
exceeding $2$.

2. In \cite{KV}, Koch and Venkov proved that if a $2$-generated Schur
$\sigma$-group is finite, then it has relations at depth $3$ and $k$
where $k \in \{3,5,7\}$ in the $p$-Zassenhaus filtration.  McLeman
\cite{McL} conjectures that the group is finite if and only if both
relations have depth $3$.  Computing dimensions of the first three
factors of the Jennings series, we observe that every Schur descendant
of $G_1$ has its relations at this depth. The apparent combinatorial
explosion in descendants of $H_1$ then casts doubt on the ``if'' part
of McLeman's conjecture.

As for Schur descendants of $G_2$, those not having both relations at
depth $3$ are precisely those descended from $J_6, \dots, J_9, J_{19},
\dots, J_{22}$. The combinatorial explosion in descendants of these
groups lends support to the ``only if'' part of McLeman's conjecture.

3.  One might ask for the probability that a $2$-generated Schur
$\sigma$-$3$-group is finite. Searching through the tree, we find $90$
descendants of $G_1$ that are Schur $\sigma$-groups of $3$-class at
most $11$, $144$ descendants of $G_2$ that are Schur $\sigma$-groups
of $3$-class at most $8$, and $222$ descendants of $G_3$ that are
Schur $\sigma$-groups of $3$-class at most $7$. Their combined measure
is slightly over $0.8533$ and so, in this sense, there is at least an
$85.33\%$ probability that a $2$-generated Schur $\sigma$-$3$-group is
finite.

As for an upper bound, it is natural, in the spirit of Golod and
Shafarevich, to conjecture that ``large'' IPADs will correspond only
to infinite groups, but one must be careful.  Extending the above
census slightly, we find that $J_1$ has Schur $\sigma$-group
descendants of $3$-class $9$ and order $3^{18}$ with IPAD $[ [3,243] ;
  [3,3,3,81],[3,729]^3 ]$. Thus, having a rank $4$ subgroup of index
$3$ (the highest rank possible by comparison with the free group) is
not sufficient to imply that the Schur $\sigma$-group is infinite.

\end{remark}

\section{Computations}

As evidence for our conjectures we have collected numerical data in
the case of the smallest odd prime $p = 3$. In particular, we have obtained IPADs for  all imaginary 
quadratic fields $K$ with $3$-class group of rank $2$ and discriminant $d_K$
satisfying $|d_K| < 10^8$ assuming GRH. 

For an imaginary quadratic field $K$ of discriminant $-d$, with $3$-class group of rank 2, the four unramified cubic extensions $F$ of $K$ can be computed using results of Fung and Williams \cite{FungWilliams}. The maximal real subfield $F^+$ of $K$ is a cubic field of discriminant $-d$. Fung and Williams allows one to compute a defining polynomial for such fields efficiently.  Indeed, one finds many such polynomials, and we can then distinguish the four isomorphism classes of fields by using GP/Pari's program \verb?nfisisom?.  It is then straightforward to compute a defining polynomial for $F=F^+(\sqrt{-d})$ and compute its class group to obtain the desired IPAD. 

Originally, we had attempted to compute the unramified extensions of $K$ and their class groups directly but there was a large amount of variation in the running times and this approach proved to be very slow. The current approach was suggested in~\cite{Ma}. There, the author refers to an object called the Transfer Target Type (TTT) of $K$. The notion of TTT is almost the same as our IPAD except that the $3$-class group of the base field is not included.

Computations were carried out using the symbolic algebra package
\pari~\cite{pari} running on $2 \times 2.66$ GHz
6-Core Intel Xeon processors running OS X 10.8.5. 
The computations were run in parallel across multiple cores by dividing up the discriminants into subintervals and searching through a space of potential defining polynomials for the cubic extensions using the coefficient bounds in \cite{FungWilliams}. Although this created some redundancy, the parallelization limited the real world running time to the maximum length across all of the intervals. Roughly 890 core hours were used in total.

{\footnotesize
\begin{table}[htdp]
\caption{Census of the most common IPADs.}
\begin{center}

\begin{tabular}{|l |c|c|c|c|c|}
\hline
 & $I_1$ & $I_{3.2}$ & $I_{10}$ & $I_{32}$ & $I_{100}$\\
\hline
$[3,3]$; $[3,3,3]$ $[3,9]^3$   & 667 & 2270 & 7622 & 25737 & 83352 \\
\hline
$[3,9]$; $[3,3,9]^2$ $[3,27]^2$   & 406 & 1497 & 4974 & 16821 & 55308 \\
\hline
$[3,3]$; $[3,3,3]^2$ $[3,9]^2$   & 269 & 1069 & 3625 & 12314 & 41398 \\
\hline
$[3,3]$; $[3,3,3]^3$ $[3,9]$   & 297 & 1056 & 3619 & 12324 & 40967 \\
\hline
$[3,3]$; $[3,9]^3$ $[9,27]$   & 276 & 973 & 3190 & 11042 & 36457 \\
\hline
$[3,3]$; $[3,3,3]$ $[3,9]^2$ $[9,27]$   & 249 & 889 & 3113 & 10739 & 35922 \\
\hline
$[3,27]$; $[3,3,27]^2$ $[3,81]^2$   & 103 & 463 & 1615 & 5620 & 18422 \\
\hline
$[3,3]$; $[3,3,3]^2$ $[9,27]^2$   & 112 & 384 & 1293 & 4593 & 15540 \\
\hline
$[3,9]$; $[3,3,9]$ $[3,9,27]$ $[3,27]^2$   & 101 & 367 & 1317 & 4559 & 15037 \\
\hline
$[3,3]$; $[3,9]^4$  & 94 & 323 & 1019 & 3284 & 10426 \\
\hline
$[3,9]$; $[3,3,9]$ $[3,27]^3$   & 75 & 254 & 844 & 2914 & 9335 \\
\hline
$[3,9]$; $[3,3,3,3]$ $[3,27]^3$   & 64 & 233 & 799 & 2734 & 9000 \\
\hline
$[3,9]$; $[3,3,9]$ $[3,27]^2$ $[9,9,9]$   & 66 & 229 & 786 & 2740 & 8953 \\
\hline
$[3,9]$; $[3,9,27]$ $[3,27]^3$   & 61 & 232 & 728 & 2447 & 8165 \\
\hline
Other IPADs (341 types)  & 350 & 1505 & 5741 & 21222 & 73643 \\
\hline
\hline
Total   & 3190 & 11744 & 40285 & 139090 & 461925 \\
\hline
\end{tabular}
\end{center}
\end{table}
}
{\footnotesize
\begin{table}[htdp]
\caption{Relative proportions of the most common IPADs.}
\label{last-table}
\begin{center}

\begin{tabular}{|l |c|c|c|c|c|c|}
\hline
 & $I_1$ & $I_{3.2}$ & $I_{10}$ & $I_{32}$ & $I_{100}$& Predicted \\
\hline
$[3,3]$; $[3,3,3]$ $[3,9]^3$   & 0.2091 & 0.1933 & 0.1892 & 0.1850 & 0.1804 & 
0.1756 \\
\hline
$[3,9]$; $[3,3,9]^2$ $[3,27]^2$   & 0.1273 & 0.1275 & 0.1235 & 0.1209 & 0.1197 &
0.1171 \\
\hline
$[3,3]$; $[3,3,3]^2$ $[3,9]^2$   & 0.0843 & 0.0910 & 0.0900 & 0.0885 & 0.0896 & 
0.0878 \\
\hline
$[3,3]$; $[3,3,3]^3$ $[3,9]$   & 0.0931 & 0.0899 & 0.0898 & 0.0886 & 0.0887 & 
0.0878 \\
\hline
$[3,3]$; $[3,9]^3$ $[9,27]$   & 0.0865 & 0.0829 & 0.0792 & 0.0794 & 0.0789 & 
0.0780 \\
\hline
$[3,3]$; $[3,3,3]$ $[3,9]^2$ $[9,27]$   & 0.0781 & 0.0757 & 0.0773 & 0.0772 & 
0.0778 & 0.0780 \\
\hline
$[3,27]$; $[3,3,27]^2$ $[3,81]^2$   & 0.0323 & 0.0394 & 0.0401 & 0.0404 & 0.0399
& 0.0390 \\
\hline
$[3,3]$; $[3,3,3]^2$ $[9,27]^2$   & 0.0351 & 0.0327 & 0.0321 & 0.0330 & 0.0336 &
0.0347 \\
\hline
$[3,9]$; $[3,3,9]$ $[3,9,27]$ $[3,27]^2$   & 0.0317 & 0.0313 & 0.0327 & 0.0328 &
0.0326 & 0.0325 \\
\hline
$[3,3]$; $[3,9]^4$  & 0.0295 & 0.0275 & 0.0253 & 0.0236 & 0.0226 & 0.0219 \\
\hline
$[3,9]$; $[3,3,9]$ $[3,27]^3$   & 0.0235 & 0.0216 & 0.0210 & 0.0210 & 0.0202 & 
0.0195 \\
\hline
$[3,9]$; $[3,3,3,3]$ $[3,27]^3$   & 0.0201 & 0.0198 & 0.0198 & 0.0197 & 0.0195 &
0.0195 \\
\hline
$[3,9]$; $[3,3,9]$ $[3,27]^2$ $[9,9,9]$   & 0.0207 & 0.0195 & 0.0195 & 0.0197 & 
0.0194 & 0.0195 \\
\hline
$[3,9]$; $[3,9,27]$ $[3,27]^3$   & 0.0191 & 0.0198 & 0.0181 & 0.0176 & 0.0177 & 
0.0173 \\
\hline
Other IPADs (341 types)  & 0.1097 & 0.1282 & 0.1425 & 0.1526 & 0.1594 & 0.1717\\
\hline
\end{tabular}
\end{center}
\end{table}
}


We now present a summary of the data collected. The first table is a census
 of the most common IPADs. The second lists
their relative proportions obtained by dividing through by the total
number of fields examined. In addition, the last column of the second
table lists the values predicted by our conjectures as computed in
Theorem~\ref{group_theory_predictions}.  Note that in lines 1 and 4 of
Table 2, the IPAD determines the isomporphism type of the group, namely
SmallGroup(243,5) and SmallGroup(243,7) respectively.  Thus, on these two lines, the predicted and computed frequencies of these two groups can be compared, providing a direct test of our non-abelian heuristics.

We have broken down the
interval of discriminants $d_K$ with $1 < |d_K| <  10^8$ into $5$ nested subintervals $I_j$ 
where $I_j = \{ d_K \mid  1 \leq -d_K \leq j \cdot 10^6  \}$ and we have
selected values of $j$ so that the length of each successive subinterval is scaled by a factor of
$\sqrt{10} \approx 3.2$.


\

\noindent{\sc Acknowledgements.}  We acknowledge useful correspondence 
and conversations with Bettina Eick, Jordan Ellenberg, John Labute, Daniel Mayer, Cam McLeman, 
Eamonn O'Brien, and Melanie Matchett~Wood.  We are grateful to Jonathan Blackhurst 
for providing the Appendix.  We would also like to thank Joann Boston for 
drawing the figure in Section 2.

\

\begin{center}
\begin{Huge}
{Appendix. On the nucleus of certain $p$-groups}
\end{Huge}

\

\begin{Large}
{Jonathan Blackhurst}
\end{Large}
\end{center}

\

In this appendix we prove the proposition that if the Schur multiplier
of a finite non-cyclic $p$-group $G$ is trivial, then the nucleus of
$G$ is trivial. Our proof of the proposition will use the facts that a
$p$-group has trivial nucleus if and only if it has no immediate
descendants and that a finite group has trivial Schur multiplier if
and only if it has no non-trivial stem extensions, so we will begin by
recalling a few definitions.  For the definition of the lower
$p$-central series and $p$-class of a group, we refer to section 2 of
the article.

\noindent {\sc Definition.}  Let $G$ be a finite $p$-group with
minimal number of generators $d=d(G)$ and presentation $F/R$ where $F$
is the free pro-$p$ group on $d$ generators.  The \textit{$p$-covering
  group} $G^*$ of $G$ is $F/R^*$ where $R^*$ is the topological
closure of $R^p[F,R]$, and the \textit{nucleus} of $G$ is $P_c(G^*)$
where $c$ is the $p$-class of $G$.  The \textit{$p$-multiplicator} of
$G$ is defined to be the subgroup $R/R^*$ of $G^*$.  The \textit{Schur
  multiplier} $\mathcal{M}(G)$ of $G$ is defined to be $(R\cap
[F,F])/[F,R]$.  A group $C$ is a \textit{stem extension} of $G$ if
there is an exact sequence $$1\rightarrow K\rightarrow C\rightarrow
G\rightarrow 1$$ where $K$ is contained in the intersection of the
center and derived subgroups of $C$.

\

We will need to recall some basic properties of Schur multipliers and
$p$-covering groups. First, for a finite group $G$, the largest stem
extension of $G$ has size $|G||\mathcal{M}(G)|$.  Hence, the Schur
multiplier of a finite group $G$ is trivial if and only if $G$ admits
no non-trivial stem extensions.  Second, every elementary abelian
central extension of $G$ is a quotient of $G^*$. By this we mean that
if $H$ is a $d$-generated $p$-group with elementary abelian subgroup
$Z$ contained in the center of $H$ such that $H/Z$ is isomorphic to
$G$, then $H$ is a quotient of $G^*$.  Every immediate descendant of
$G$ is an elementary abelian central extension of $G$, hence is a
quotient of $G^*$.  A subgroup $M$ of the $p$-multiplicator of $G$ is
said to supplement the nucleus if $M$ and the nucleus together
generate the $p$-multiplicator, that is $MP_c(G^*)=R/R^*$.  The
immediate descendants of $G$ can be put in one-to-one correspondence
with equivalence classes of proper subgroups $M$ of the
$p$-multiplicator of $G$ that supplement the nucleus.  The equivalence
relation comes from the action of the outer automorphism group of
$G^*$, so $M$ and $N$ are equivalent if there is an outer automorphism
$\sigma$ of $G^*$ such that $\sigma(M)=N$. The reader is referred to
O'Brien \cite{O} for more details.

With these preliminaries in place, we can show that the non-cyclic hypothesis in our proposition is necessary by considering the finite cyclic $p$-group $G=\mathbb{Z}/p^c\mathbb{Z}$. The Schur multiplier is trivial since in this case $F=\mathbb{Z}$ so $[F,F]$ is trivial. On the other hand, the nucleus is non-trivial since in this case $F=\mathbb{Z}_p$ and $R=p^c\mathbb{Z}_p$ so $R^*=p^{c+1}\mathbb{Z}_p$ and $G^*=F/R^*=\mathbb{Z}/p^{c+1}\mathbb{Z}$ which implies that $P_c(G^*)=p^cG^*$ is non-trivial.

\

\noindent {\sc Proposition:} \textit{Let $G$ be a finite non-cyclic $p$-group. If the Schur multiplier of $G$ is trivial, then the nucleus of $G$ is trivial.}

\begin{proof} We will prove the following equivalent assertion: if the nucleus of $G$ is non-trivial, then $G$ has a non-trivial stem extension. We divide the problem into two cases depending on whether the abelianization of $G$ has stabilized; that is, whether the abelianization of an immediate descendant of $G$ can have larger order than the abelianization $G^{ab}$ of $G$. We will see that this is equivalent to whether or not $G^{ab}\simeq (G/P_{c-1}(G))^{ab}$ where $G$ has $p$-class $c$.

\noindent CASE 1: Suppose that $G^{ab}\simeq (G/P_{c-1}(G))^{ab}$ and that the nucleus of $G$ is non-trivial. Since the nucleus is non-trivial, $G$ has an immediate descendant $C$ and we have the following diagram $$1\rightarrow K\rightarrow C\rightarrow G\rightarrow 1$$ where $K=P_c(C)$. Note that since $C/P_{k}(C)\simeq G/P_{k}(G)$ for $k\leq c$, we have that $(C/P_{c-1}(C))^{ab}\simeq (C/K)^{ab}$. If $P_{c-1}(C)$ were not contained within the derived subgroup $C'$ of $C$, then its image $\overline{P_{c-1}}(C)$ in $C/C'$ would be non-trivial. Since $K=P_{c-1}(C)^p[C,P_{c-1}(C)]$, the image $\overline{K}$ of $K$ would be $\overline{P_{c-1}}(C)^p$ and thus would be stricly smaller than $\overline{P_{c-1}}(C)$. Now $(C/H)^{ab}\simeq (C/C')/\overline{H}$ for any $H\triangleleft C$, so, replacing $H$ with $K$ and $P_{c-1}(C)$, we see that $(C/P_{c-1}(C))^{ab}$ would be smaller than $(C/K)^{ab}$, contradicting that they are isomorphic. Thus $P_{c-1}(C)<C'$, hence $K<C'$, so $C$ is a stem
  extension of $G$. Since $G$ has a non-trivial stem extension, its Schur multiplier is non-trivial.

\noindent CASE 2: Suppose that $G^{ab} \not\simeq (G/P_{c-1}(G))^{ab}$. Let $$1\rightarrow R\rightarrow F\rightarrow G\rightarrow 1$$ be a presentation of $G$ where $F$ is free pro-$p$ group on $d$ generators and $d$ is the minimal number of generators of $G$. Induction and the argument in the preceding case shows that $(G/P_k(G))^{ab}$ is strictly smaller than $(G/P_{k+1}(G))^{ab}$ for any $k<c$. Furthermore, since the image $\overline{P_{k+1}}(G)$ of $P_{k+1}(G)$ in $G/G'$ is $\overline{P_k}(G)^p$, there must be a generator $b$ of $F$ such that the image of $b^{p^{c-1}}$ in $G$ lies outside $G'$. Now consider $R^*=R^p[F,R]$ and let $G^*=F/R^*$ be the $p$-covering group of $G$. We have the following diagrams: $$1\rightarrow R^*\rightarrow F\rightarrow G^*\rightarrow 1$$ and $$1\rightarrow R/R^*\rightarrow G^*\rightarrow G\rightarrow 1$$ 

We now show that the image of $b^{p^c}$ in $P_c(G^*)$ is non-trivial so $G$ has non-trivial nucleus. Let $G$ have abelianization isomorphic to $\mathbb{Z}/p^{n_1}\mathbb{Z}\times\cdots\times\mathbb{Z}/p^{n_d}\mathbb{Z}$. Consider the topological closure $S$ of $R\cup [F,F]$. Then $F/S$ is isomorphic to $G^{ab}$. The group $\mathbb{Z}/p^{n_1+1}\mathbb{Z}\times\cdots \times\mathbb{Z}/p^{n_d+1} \mathbb{Z}$ is an elementary abelian central extension of $F/S$. This implies that $b^{p^c}$ lies outside $S^*=S^p[F,S]$. Since $R\subset S$, we have that $R^*\subset S^*$. Hence $b^{p^c}$ lies outside $R^*$ so it has non-trivial image in $G^*$. Since its image lies inside $P_c(G^*)$, this group is non-trivial.

We have shown that $G$ has non-trivial nucleus. Now let $a$ be a generator of $F$ independent of $b$---i.e., one that doesn't map to the same element as $b$ in the elementary abelianization of $F$---and let $\overline{M}$ be a proper subgroup of $R/R^*$ that contains the image of $b^{p^c}[a,b^{p^{c-1}}]$ and that supplements the subgroup of $R/R^*$ generated by the image of $b^{p^c}$ (so $\overline{M}$ and the image of $b^{p^c}$ generate $R/R^*$). Now consider $C=G^*/\overline{M}$. Letting $K=(R/R^*)/\overline{M}$, we have the following diagram $$1\rightarrow K\rightarrow C\rightarrow G\rightarrow 1$$ Since $G^*$ is a central extension of $G$ and $C$ is a quotient of $G^*$, $C$ is also a central extension of $G$. Furthermore, $|K|=p$. Now let $M$ be the subgroup of $F$ corresponding to $\overline{M}$ under the lattice isomorphism theorem. Then we have the following diagram: $$1\rightarrow M\rightarrow F\rightarrow C\rightarrow 1$$ Since $M$ does not contain $b^{p^c}$, its image in $C
 $ is non-trivial. Since $G$ has $p$-class $c$, the image of $b^{p^c}$ is trivial in $G$. Also since $|K|=p$, the image of the powers of $b^{p^c}$ constitute $K$. Since $M$ does contain $b^{p^c}[a,b^{p^{c-1}}]$, the image of $b^{p^c}$ in $C$ equals the image of $[b^{p^{c-1}},a]$, hence $K$ lies in the derived subgroup of $C$, so $C$ is a non-trivial stem extension of $G$.  Consequently, the Schur multiplier of $G$ is non-trivial. 
 \end{proof}

\noindent Jonathan Blackhurst ~~~ \textsf{blackhur@math.wisc.edu}
\\
\noindent Department of Mathematics, University of Wisconsin - Madison,  
480 Lincoln Drive, Madison, WI 53706, USA

\end{document}